\documentclass[10pt,letterpaper]{article}
\usepackage{amsmath,amssymb,amsthm,mathtools}
\usepackage{enumitem,microtype,needspace}
\usepackage[hidelinks]{hyperref}

\setlist[enumerate,1]{label=(\roman*),leftmargin=25pt,itemsep=3pt,topsep=5pt,parsep=0pt}
\newtheorem{theorem}{Theorem}[section]
\newtheorem{lemma}[theorem]{Lemma}
\newtheorem{corollary}[theorem]{Corollary}

\theoremstyle{definition}
\newtheorem{definition}[theorem]{Definition}
\theoremstyle{remark}

\newcommand{\Int}{\operatorname{Int}}
\newcommand{\eps}{\varepsilon}

\newcommand{\dcup}{\mathbin{\dot\cup}}
\hypersetup{
  pdftitle={A linear bound for nested cycles without geometric crossings},
  pdfauthor={Jiangdong Ai, Gregory Gutin, Yiming Hao},
  pdfsubject={Nested cycles without geometric crossings},
  pdfkeywords={nested cycles, geometric crossings, extremal graph theory, sublinear expanders, graph minors}
}

\title{A linear bound for nested cycles without geometric crossings}
\author{Jiangdong Ai\thanks{School of Mathematical Sciences and LPMC, Nankai University, Tianjin 300071, P.R. China. \texttt{jd@nankai.edu.cn}. Partially supported by the National Natural Science Foundation of China (No. 12522117) and Fundamental and Interdisciplinary Disciplines Breakthrough Plan of the Ministry of Education of China (JYB2025XDXM207).}
\quad Gregory Gutin\thanks{Corresponding author. Department of Computing, Security and Mathematics, Royal Holloway, University of London, Egham, Surrey TW20 0EX, UK. Email: \texttt{g.gutin@rhul.ac.uk}.}
\quad Yiming Hao\thanks{School of Mathematical Sciences and LPMC, Nankai University, Tianjin 300071, P.R. China. Email: \texttt{1120230031@mail.nankai.edu.cn}.}}
\date{}

\begin{document}
\maketitle

\begin{abstract}
Cycles $C_1,\ldots,C_k$ in a graph are called nested without geometric crossings if they are pairwise edge-disjoint, $V(C_k)\subseteq\cdots\subseteq V(C_1)$, and each pair of consecutive cycles induces the same cyclic order on the vertices of the inner cycle, up to reversal. Let $f_k(n)$ be the least number of edges that forces such a family in every $n$-vertex graph. Gil Fern\'andez, Kim, Kim and Liu proved that $f_2(n)=O(n)$, answering a question of Erd\H{o}s, and asked whether $f_k(n)=O_k(n)$ for every fixed $k$. We prove this for all $k$. The proof selects the inner cycles together with a disjoint subgraph that supplies their external neighbours. A reselection argument gives disjoint paths from every inner-cycle vertex to any sufficiently large target set. Sublinear expansion and a rooted clique minor then allow the vertices to be joined in the required cyclic order.
\end{abstract}

\noindent\textbf{Mathematics Subject Classification (2020).} 05C35, 05C38.

\smallskip
\noindent\textbf{Keywords.} Nested cycles, geometric crossings, extremal graph theory, sublinear expanders, graph minors, disjoint paths.

\section{Introduction}\label{sec:introduction}

All graphs are finite, simple and undirected. For a finite set $U$, $|U|$ denotes its cardinality. For a graph $G$, let $V(G)$ and $E(G)$ denote its vertex and edge sets, respectively, and write $|G|=|V(G)|$ and $e(G)=|E(G)|$. An edge with endpoints $u,v$ is written $uv$. For $U\subseteq V(G)$, the induced subgraph $G[U]$ has vertex set $U$ and all edges of $G$ with both endpoints in $U$. For a cycle $C$, its length is $|C|=|V(C)|=|E(C)|$.

If $C$ is a cycle and $X\subseteq V(C)$, the \emph{cyclic order} induced by $C$ on $X$ is the order in which the vertices of $X$ are encountered when traversing $C$, considered up to cyclic shift.

\begin{definition}\label{def:nested}
A sequence $C_1,\ldots,C_k$ of cycles in a graph is a family of $k$ \emph{nested cycles without geometric crossings} if
\begin{enumerate}
\item $V(C_k)\subseteq V(C_{k-1})\subseteq\cdots\subseteq V(C_1)$;
\item the cycles are pairwise edge-disjoint; and
\item for every $i<k$, the cyclic order induced by $C_i$ on $V(C_{i+1})$ agrees, up to reversal, with the cyclic order of $C_{i+1}$.
\end{enumerate}
\end{definition}

We call such a sequence a \emph{$k$-layer family}; $C_1$ is its \emph{outermost cycle} and $V(C_1)$ its \emph{outer vertex set}.

For integers $k\ge2$ and $n\ge1$, let $f_k(n)$ be the least integer such that every $n$-vertex graph with at least $f_k(n)$ edges contains such a family. When $C_i$ is drawn as a convex polygon, conditions (ii) and (iii) say that the edges of $C_{i+1}$ are noncrossing chords. A family with $k=1$ consists simply of a cycle. We use this case as the base of an induction.

In 1975, Erd\H{o}s~\cite{Erdos} asked several questions about edge-disjoint cycles with additional structure, including nested cycles without geometric crossings. If condition (iii) is omitted, Bollob\'as~\cite{Bollobas} proved that a linear number of edges forces two nested cycles, and Chen, Erd\H{o}s and Staton~\cite{CES} extended this to every fixed number of cycles. These results do not impose a cyclic order on the common vertices. Gil Fern\'andez, Kim, Kim and Liu~\cite{GKKL} proved $f_2(n)=O(n)$ and asked whether the same conclusion holds for every fixed $k$.

Xu, Zeng and Zhang~\cite{XZZ} obtained the first general upper bound,
\[
f_k(n)=O_k\!\left(n(\log n)^{k-1}(\log\log n)^{k-3}\right)
\qquad(k\ge3).
\]
Their proof builds the layers successively in a robust sublinear expander, controlling the length of the outer cycle at each step. In~\cite{AGH}, we proved
\[
f_k(n)=O_k\!\left(n\,\frac{(\log\log n)^2}{\log\log\log n}\right)
\qquad(k\ge3).
\]
That argument derives expansion and local sparsity from a slightly superlinear extremal threshold. The local sparsity permits the construction of private trees at the vertices of the inner cycle. The density comparison used to grow these trees, however, does not hold with a constant weight.

Our main result removes the remaining dependence on $n$ from the density threshold and answers the question of Gil Fern\'andez, Kim, Kim and Liu~\cite[Question 1.2]{GKKL}.

\begin{theorem}\label{thm:main}
For every fixed integer $k\ge1$, there is a constant $C_k>0$ such that every graph $G$ with $e(G)>C_k|G|$ contains $k$ nested cycles without geometric crossings. Consequently, $f_k(n)=O_k(n)$ for every fixed $k\ge2$.
\end{theorem}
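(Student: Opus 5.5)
We argue by induction on $k$, following the outline in the abstract. For $k=1$ any graph with $e(G)\ge|G|$ contains a cycle, so $C_1=1$ works. For the induction step we do not build the layers one at a time from the outside in. Instead we first find a $(k-1)$-layer family $C_2,\ldots,C_k$ in a suitable part of $G$, and then construct a new outermost cycle $C_1$. This cycle must pass through every vertex of $V(C_2)$ in the cyclic order of $C_2$ and must be edge-disjoint from all the inner cycles. Given an ordering $v_1,\ldots,v_m$ of $V(C_2)$ along $C_2$, it suffices to find internally vertex-disjoint paths $P_i$ from $v_i$ to $v_{i+1}$ (indices mod $m$). These paths should avoid all edges of $C_2,\ldots,C_k$ and otherwise avoid $V(C_2)$. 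Their union is then a cycle with vertex set containing $V(C_2)\supseteq\cdots$, inducing the required cyclic order, and edge-disjoint from the inner cycles.

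The first step is a standard reduction. Passing to a subgraph with large minimum degree and then to a robust sublinear expander (in the sense of Koml\'os--Szemer\'edi, as used in \cite{GKKL,XZZ}), we obtain $H\subseteq G$ with average degree $d\ge C_k/2$ and expansion function $\eps(x)\approx 1/\log^2 x$. We split $H$ into two pieces. The first is a sparse piece $H_1$ of average degree at least $C_{k-1}$, which by induction contains a $(k-1)$-layer family. The second is a vertex-disjoint \emph{supply subgraph} $F$ in which every vertex of $V(C_2)$ has many neighbours, and these neighbourhoods must be private. To achieve this, we choose the inner family to minimise $|V(C_2)|$, or more generally some potential. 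If some vertex of $V(C_2)$ has too few external neighbours into $F$, we reselect the family inside a smaller dense piece. This reselection argument should yield the following \emph{linkage property}: for every sufficiently large target set $T\subseteq V(F)$, there are $m$ vertex-disjoint paths from $v_1,\ldots,v_m$ to distinct vertices of $T$, internally avoiding $V(C_2)$. Equivalently, by Menger's theorem, no small separator exists between $V(C_2)$ and $T$.

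The final step connects the endpoints in the prescribed cyclic order. Inside $F$ we use sublinear expansion to build a rooted clique minor, or a large ``web''/clique-like linkage structure. Concretely, we find $m$ vertex-disjoint connected branch sets $B_1,\ldots,B_m$, pairwise joined by paths avoiding a small set. These are made robust enough that, after the linkage paths land on one vertex in each $B_i$, we can route $B_i$ to $B_{i+1}$ for each $i$ along disjoint connections. The size of the target set is chosen to be the union of the branch sets, which is large enough for the linkage property to apply. Combining the linkage paths from $v_i$ into $B_i$ with the $B_i$--$B_{i+1}$ connections and splitting appropriately yields the paths $P_i$.

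The main obstacle is that $m=|C_2|$ is not bounded in terms of $k$. It may be as large as polylogarithmic in $n$, and this unboundedness is exactly what caused the $\log$ losses in \cite{XZZ,AGH}. Rooted clique minors of order $m$ in an expander of constant average degree require $m$ to be small relative to the expansion. So I would first try to choose the inner family with $|V(C_2)|$ minimal, or of length at most $O(\log^{c} |H_1|)$, within a piece $H_1$ much smaller than $F$. Then the expansion of $F$ at scale $|F|$ supports $m$ disjoint connections via the Koml\'os--Szemer\'edi path-connecting lemma, at the cost of avoiding sets of size $o(|F|\eps(|F|))$. Making the reselection argument deliver private neighbourhoods without the density loss incurred in \cite{AGH} is the crux, and I expect it to need careful accounting of the constant-weight density comparison.
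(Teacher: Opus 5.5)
Your outline has the right shape: a minimal inner family with an external supply, linkage to every large target, and a clique minor to impose the order. But the step that carries the proof is only asserted. You say the reselection ``should yield'' the linkage property and that $m=|V(C_2)|$ can be made polylogarithmic, and you leave both to ``careful accounting''. That is not bookkeeping; it is the main new argument.

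The paper minimises $q=|X|$ not over all $j$-layer families but over \emph{eligible} ones. These are families whose outer vertex set admits a disjoint reservoir $B$ with $d_H(x,B)\ge2r$ and $\delta(H[B])\ge r$, so every competing family comes with its own supply. Minimality gives $e(H[S])\le\lambda|S|$ for $|S|<q$. A separate local construction (private binary trees via Montgomery's extendability, wrapped by expander path-connection) shows that such a local bound forces a short eligible family, so $q\le(\log n)^{a_j}$. A second bound, $e(H[S])\le256\kappa_j|S|$ for $2|S|<q$, has a constant independent of $r$. It comes from reselecting a family inside $S\cup L$ while keeping the $r$-core of $H-S$ as part of the reservoir. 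The linkage property is then a max-flow/min-cut statement. A deficient cut produces either a set of fewer than $q/2$ vertices with density above $r/2\ge512\kappa_j$, or a set of order below $\mathcal N(q)$ with density above $K$. The latter, by the local construction at scale $\mathcal N(q)$, contains an eligible family with fewer than $q$ outer vertices.

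Two further steps fail as written.

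First, your linkage property supplies one path from each $v_i$. But $v_i$ has degree two on $C_1$, so $P_{i-1}$ and $P_i$ need distinct exits from $v_i$; with a single path they would have to share it. You need $2m$ paths, two per root, disjoint outside $V(C_2)$. The paper gets this by putting capacity two on each source arc of its network.

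Second, taking $T$ to be the union of the branch sets does not make the path from $v_i$ end in a prescribed $B_i$. Nor does it stop the paths, which may be long, from running through other branch sets. Routing $B_i$ to $B_{i+1}$ with the Koml\'os--Szemer\'edi lemma would need large robust sets at every root. That is essentially the private trees whose cost produced the earlier logarithmic losses.

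The paper handles this as follows:
\begin{enumerate}
\item It obtains a $K_h$ model in $H[R]$, with $h=\lceil F_\eps(4q)\rceil+4q+2$, from the Alon--Seymour--Thomas separator theorem versus expansion. This is where the polylogarithmic bound on $q$ is used.
\item It replaces each root by two nonadjacent clones.
\item It observes that the all-targets linkage property excludes every separation of order $<2q$ with the clones on one side and a whole branch set on the other. In such a separation at least $h-2q+1$ branch sets would lie on the far side, giving a target that $2q$ disjoint paths cannot reach through fewer than $2q$ separator vertices.
\item Nguyen's rooted-minor lemma then yields a $K_{2q}$ model rooted at the clones, and joining the branch sets of $x_i^+$ and $x_{i+1}^-$ gives the paths in the prescribed order.
\end{enumerate}
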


The order of the bound in $n$ is best possible: an $n$-vertex unicyclic graph (a connected graph with exactly one cycle) has $n$ edges and does not contain two edge-disjoint cycles. We make no attempt to optimise the dependence of $C_k$ on $k$.

The sublinear expander method originates in work of Koml\'os and Szemer\'edi~\cite{KS}; robust forms were developed by Haslegrave, Kim and Liu~\cite{HKL} and by Alon, Buci\'c, Sauermann, Zakharov and Zamir~\cite{ABSZZ}. We need only ordinary vertex expansion, and give a direct extraction proof. The final step uses the separator theorem of Alon, Seymour and Thomas~\cite{AST} and a rooted clique-minor lemma in the form given by Nguyen~\cite{Nguyen}. The latter is related to the disjoint-paths theorem of Robertson and Seymour~\cite{RS}; a linkage formulation is also stated by Kawarabayashi, Kobayashi and Reed~\cite{KKR}.

\subsection*{Overview of the proof}

Assume that a constant density forces $j$ layers. In a weak expander $H$ of sufficiently large minimum degree, we choose a $j$-layer family together with a disjoint \emph{reservoir} $B$. If $X$ is the vertex set of its outer cycle, every vertex of $X$ is required to have at least $2r$ neighbours in $B$, and $H[B]$ has minimum degree at least $r$, where $r$ depends only on the preceding density constant. Among all families admitting such a reservoir, choose one with $q=|X|$ minimum. Both the family and its reservoir may change in this minimisation.

This choice first gives a local edge bound on all sets of fewer than $q$ vertices. Section~\ref{sec:local} proves a short-cycle construction for graphs satisfying a local edge bound at a fixed polylogarithmic scale. Applied to the selected family, it shows that $q$ is polylogarithmic in $|H|$. This is a consequence of the selection, not an additional induction hypothesis.

The main reselection argument is in Section~\ref{sec:selection}. We first show how to keep a prescribed external subgraph of minimum degree at least $r$ while choosing a dense core on the remaining vertices. This gives a second local edge bound, with a constant depending on the preceding density threshold rather than on $r$. The two bounds rule out all deficient cuts in a vertex-capacitated network. As a result, every vertex of $X$ has two paths to any sufficiently large target set in the reservoir, and the paths are disjoint outside $X$.

Section~\ref{sec:proof} uses expansion and the separator theorem to find a sufficiently large clique minor after deleting $X$ and fewer than $q$ additional vertices. Replace each root by two clones. The path property excludes every small separation between the clones and this minor, so the rooted clique-minor lemma applies. Pairing the clones of consecutive roots gives the additional outer cycle. This cycle need not be short. The induction uses only the existence of the preceding layers and chooses a new short family at the next step.

\subsection*{Notation}

For $v\in V(G)$, let $N_G(v)=\{w\in V(G):vw\in E(G)\}$ be its neighbourhood and $d_G(v)=|N_G(v)|$ its degree. For a nonempty graph $G$, write $\delta(G)=\min_{v\in V(G)}d_G(v)$, $\Delta(G)=\max_{v\in V(G)}d_G(v)$ and $d(G)=2e(G)/|G|$ for its minimum, maximum and average degrees. We use $\Delta(G)=0$ when $V(G)=\varnothing$. For $U\subseteq V(G)$, we distinguish
\[
\begin{aligned}
N_G(U)&=\{v\in V(G)\setminus U:uv\in E(G)\text{ for some }u\in U\},\\
\Gamma_G(U)&=\bigcup_{u\in U}N_G(u),\qquad N_G(U)=\Gamma_G(U)\setminus U.
\end{aligned}
\]
Thus $N_G(U)$ is the external neighbourhood and never meets $U$, whereas $\Gamma_G(U)$ may meet $U$. In particular, $N_G(v)=N_G(\{v\})$. For $v\in V(G)$ and $W\subseteq V(G)$, let $d_G(v,W)=|N_G(v)\cap W|$; for disjoint $U,W\subseteq V(G)$, let $e_G(U,W)=|\{uv\in E(G):u\in U,\ v\in W\}|$. All subscripts refer to the graph in which neighbours and degrees are counted. For example, if $U\subseteq R\subseteq V(H)$, then $N_{H[R]}(U)=(\Gamma_H(U)\cap R)\setminus U$, and $d_{H[R]}(v)=d_H(v,R)$ for $v\in R$.

We write $H\subseteq G$ for a subgraph, not necessarily induced. Vertex deletion means $G-U=G[V(G)\setminus U]$, and $G-v=G-\{v\}$. The union of subgraphs has the unions of their vertex sets and edge sets. The symbol $\dcup$ denotes a union of pairwise disjoint sets. Disjointness or avoidance between subgraphs refers to their vertex sets unless edge-disjointness is specified. A path has distinct vertices and its length is its number of edges; a path of length zero is a single vertex. If $P$ has endpoints $u,v$, then $\Int(P)=V(P)\setminus\{u,v\}$, and $P[a,b]$ is the subpath between $a,b\in V(P)$, including its endpoints. An $X$--$Y$ path has one endpoint in each set and no internal vertex in $X\cup Y$; a common vertex gives a path of length zero. Paths are \emph{disjoint outside $X$} if their sets $V(P)\setminus X$ are pairwise disjoint. They are \emph{internally vertex-disjoint} if their internal vertex sets are pairwise disjoint and no endpoint of one is internal to another. A ball of radius $t$ about a vertex set consists of the vertices reachable from that set by a path of length at most $t$.

We use $\mathbb N=\{1,2,\ldots\}$; $\lfloor x\rfloor$ and $\lceil x\rceil$ are the greatest integer at most $x$ and the least integer at least $x$, respectively. All logarithms are natural except $\log_2$. 

\section{Preliminaries}\label{sec:preliminaries}

We begin with two elementary facts used in the local construction.

\begin{lemma}\label{lem:prune}
Every graph $G$ with at least one edge has a nonempty subgraph $H$ with $\delta(H)\ge e(G)/|G|$. More generally, for any $a>0$, if $e(G)>a|G|$, repeatedly deleting vertices of current degree less than $a$ leaves a nonempty graph $H$ with $\delta(H)\ge a$ and $e(H)>a|H|$.
\end{lemma}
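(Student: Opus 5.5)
We prove the general statement by a deletion process and then deduce the first assertion from it. Fix $a>0$ with $e(G)>a|G|$, and delete vertices one at a time, each time removing a vertex whose degree in the current graph is less than $a$. Stop when no such vertex remains, and let $H$ be the final graph. The key quantity to track is $e(\cdot)-a|\cdot|$.

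When we delete a vertex of current degree $d<a$, the edge count drops by $d$ and the vertex count by $1$. So $e-a|\cdot|$ changes by $-d+a>0$. Hence this potential strictly increases at every step, and in particular stays positive: $e(H)-a|H|\ge e(G)-a|G|>0$. The process terminates because $G$ is finite. It cannot end at the empty graph, since there the potential would be $0-0=0$, a contradiction. So $H$ is nonempty, and $e(H)>a|H|$. By the stopping rule, every vertex of $H$ has degree at least $a$, so $\delta(H)\ge a$.

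For the first assertion, let $G$ have at least one edge and set $a=e(G)/|G|$. Here $e(G)=a|G|$, so the strict inequality fails, and we adjust the argument as follows. Run the same process, now deleting vertices of degree strictly less than $a$. The potential $e-a|\cdot|$ starts at $0$ and strictly increases with each deletion. If at least one deletion occurs, the potential becomes positive, so the final graph cannot be empty (where it equals $0$). If no deletion occurs, $H=G$, which is nonempty since it has an edge. In both cases $H$ is a nonempty subgraph with $\delta(H)\ge a=e(G)/|G|$.

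The only delicate point is this boundary case of equality in the first assertion; the strict monotonicity of the potential under each deletion handles it. Everything else is routine bookkeeping.
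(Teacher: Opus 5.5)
Your proof is correct and follows the paper's argument. Your potential $e(\cdot)-a|\cdot|$, which strictly increases at each deletion, is the paper's observation that the strict inequality $e>a|\cdot|$ survives every deletion. For the equality case $a=e(G)/|G|$, your case split restates the paper's count that deleting all vertices would remove fewer than $a|G|=e(G)$ edges in total, which is impossible.
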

\begin{proof}
For the first statement, let $a=e(G)/|G|$ and repeatedly delete a vertex whose current degree is less than $a$. If every vertex were deleted, summing the degrees at deletion would give fewer than $a|G|=e(G)$ edges, although every edge is counted once. For the second statement, the strict inequality $e(H)>a|H|$ is preserved by every deletion and is false for the empty graph. Thus the deletion process cannot remove all vertices.
\end{proof}

\begin{lemma}\label{lem:shortcycle}
Every graph $H$ with $\delta(H)\ge3$ contains a cycle of length at most $2\log_2|H|+3$.
\end{lemma}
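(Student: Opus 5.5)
We use the standard breadth-first-search argument. Let $g$ be the girth of $H$; we may assume $H$ has a cycle, since $\delta(H)\ge3$ forces one. Suppose for contradiction that $g\ge 2\log_2|H|+4$. Fix a vertex $v$ and set $t=\lfloor (g-1)/2\rfloor$. We aim to show that the ball of radius $t$ about $v$ induces a tree in $H$. Then a counting argument using $\delta(H)\ge3$ will force this ball to have more than $|H|$ vertices.

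\emph{The ball is a tree.} Take a breadth-first search tree $T$ rooted at $v$, truncated at depth $t$. Suppose some edge $xy$ of $H$, with $x,y$ in the ball, is not in $T$. Then $xy$ together with the two tree paths from $x$ and $y$ to their last common ancestor forms a cycle. This cycle has length at most $2t+1\le g$. If the depths of $x$ and $y$ are both at most $t-1$, or if the edge is between depth $t$ and depth at most $t$, we still get length at most $2t+1$. It remains to check this is strictly less than $g$, or else equal to $g$ only in a way compatible with $g$. To stay safe I would instead take $t=\lfloor (g-2)/2\rfloor$, so that $2t+1<g$. Then every edge among vertices at distance at most $t$ from $v$ lies in $T$, and the vertices at distance $i<t$ have all their $H$-neighbours within distance $i+1$.

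\emph{Counting.} The root has at least $3$ children. Each vertex at depth $1\le i<t$ has at most one neighbour in the ball closer to $v$, namely its parent, so it has at least $2$ children. No two vertices share a child, since a shared child would give a short cycle. Hence the number of vertices at depth $t$ is at least $3\cdot2^{t-1}$, and so $|H|\ge 1+3(2^t-1)>2^t$. This gives $t<\log_2|H|$. With $t\ge (g-3)/2$, we get $g<2\log_2|H|+3$, a contradiction with the assumption. So the girth is at most $2\log_2|H|+3$, and a shortest cycle has the required length.

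\emph{Main obstacle.} The only delicate point is the off-by-one bookkeeping between $t$ and $g$ in the tree step. One must make sure the chosen radius makes the ball genuinely acyclic, with no non-tree edges even between two vertices at depth $t$, while still giving the constant $+3$. Edges between two depth-$t$ vertices do not affect the count of depth-$t$ vertices, so the counting only needs acyclicity up to depth $t-1$ plus distinct children at depth $t$. This requires only $2t<g$, i.e.\ $t=\lfloor(g-1)/2\rfloor$. That gives $g\le 2t+2<2\log_2|H|+2$, comfortably within the stated bound.
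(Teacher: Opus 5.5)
Your proof is correct and is essentially the paper's argument: a breadth-first ball whose radius is below half the girth is a tree, and $\delta(H)\ge3$ forces it to have at least $1+3(2^t-1)$ vertices. The paper differs only in bookkeeping: it fixes $t=\lceil\log_2|H|\rceil$ and assumes no cycle of length at most $2t+1$, rather than parametrising by the girth. In a final write-up, commit to one radius; either $t=\lfloor(g-2)/2\rfloor$ or, as you observe, $t=\lfloor(g-1)/2\rfloor$ works, and the latter needs only $2t<g$ and gives the slightly sharper bound $g<2\log_2|H|+2$.
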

\begin{proof}
Let $m=|H|$ and $t=\lceil\log_2m\rceil$. If no cycle has length at most $2t+1$, every breadth-first search ball of radius $t$ is a tree: an edge outside the search tree would close a cycle of length at most $2t+1$. The root has at least three children and each nonroot vertex at depth less than $t$ has at least two. The ball therefore has at least $1+3(2^t-1)>m$ vertices, a contradiction. Thus the required cycle has length at most $2t+1\le2\log_2m+3$.
\end{proof}

\subsection{Expansion and reservoirs}

Throughout the paper, let
\[
\eps=\frac1{16},\qquad
\rho(x)=\frac{\eps}{\log^2(15x)}\quad(x\ge1),\qquad
R(n)=\left\lceil160\log^3(15n)\right\rceil\quad(n\ge1).
\]
Here $\rho$ is the expansion rate and $R(n)$ is a path-length bound for graphs of order at most $n$. We call $H$ a \emph{weak expander} if, for every $U\subseteq V(H)$,
\begin{equation}\label{eq:expand}
|N_H(U)|\ge\rho(|U|)|U|
\qquad(1\le|U|\le |H|/2).
\end{equation}
No robustness under edge deletion is included in this definition. The following extraction argument is a bounded-potential version of the minimal-graph argument in~\cite{AGH}.

\begin{lemma}\label{lem:extract}
Let $M>0$. If $e(G)>M|G|$, then $G$ has an induced subgraph $H$ which is a weak expander and satisfies $e(H)>M|H|/2$ and $\delta(H)>M/2$.
\end{lemma}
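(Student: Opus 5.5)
We use a bounded-potential minimality argument. For an induced subgraph $F\subseteq G$ define
\[
\Phi(F)=\frac{e(F)}{|F|}-\frac M2\,\psi(|F|),
\]
where $\psi$ is an increasing function on $[1,\infty)$ with $0\le\psi\le1$. We choose $\psi$ so that its total increase is controlled by $\sum\rho$. A concrete choice is
\[
\psi(x)=1-\sum_{j\ge\lceil x\rceil}\ \text{(terms of order }\rho(\cdot)\text{ along a doubling sequence)}.
\]
Equivalently, we fix the form
\[
\psi(x)=1-c\int_x^\infty\frac{\rho(t)}{t}\,dt .
\]
Since $\int_1^\infty \eps\, dt/(t\log^2(15t))=\eps/\log 15<1/32$, the function $\psi$ stays in $[1/2,1]$ once $c$ is moderate. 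Among all induced subgraphs $F$ with $e(F)>\frac M2\psi(|F|)|F|$, take $H$ with $|H|$ minimum. In fact we take $H$ to be a minimal induced subgraph among those with $e(F)/|F|\ge \frac M2+\frac M2\psi(|F|)$, or a variant of this. The precise normalisation will be tuned so that $G$ itself qualifies, using $e(G)>M|G|$ and $\psi\le1$, and so that every qualifying graph has $e(H)>M|H|/2$.

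\textbf{Minimum degree.} If some $v$ had $d_H(v)\le M/2$, then deleting $v$ keeps the density ratio essentially the same. Since $\psi$ is increasing, $\psi(|H|-1)\le\psi(|H|)$, so $H-v$ would still qualify, contradicting minimality. A short computation handles this: removing $v$ loses at most $M/2$ edges while the threshold drops by at least $\frac M2\psi(|H|)\ge$ that amount after accounting for the $\frac M2$ baseline. We set the threshold as $a(x)x$ with $a(x)=\frac M2(1+\psi(x))$. Deleting a vertex of degree $\le M/2<a$ then preserves $e>a(|H|-1)(|H|-1)$, because $a$ is nondecreasing in $x$ and $a\ge M/2$... and in fact $a\le M$ suffices. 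The same step gives $e(H)>a|H|\ge M|H|/2$.

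\textbf{Expansion, which is the main obstacle.} Suppose $U$ with $1\le|U|\le|H|/2$ has $|N_H(U)|<\rho(|U|)|U|$. Let $W=V(H)\setminus U$ and $S=N_H(U)$, and compare the two induced subgraphs $H[U\cup S]$ and $H[W]$. Every edge of $H$ lies in one of them. Hence
\[
e(H[U\cup S])+e(H[W])\ge e(H)>a(|H|)|H|.
\]
On the other hand, both graphs are proper induced subgraphs (since $|U|\le|H|/2$), so by minimality each fails the threshold:
\[
e(H[U\cup S])\le a(|U|+|S|)(|U|+|S|),\qquad e(H[W])\le a(|W|)|W|.
\]
Since $|U|+|S|+|W|=|H|+|S|$, we get
\[
a(|H|)|H|<a(|U\cup S|)|U\cup S|+a(|W|)|W|\le a(|H|)(|H|-|U|)+a(|U\cup S|)(|U|+|S|).
\]
This rearranges to
\[
\bigl(a(|H|)-a(|U\cup S|)\bigr)|U|<a(|U\cup S|)|S|\le M\rho(|U|)|U|.
\]
So the argument will work if the increment of $\psi$ from $|U|(1+\rho)$ to $|H|\ge2|U|$ is at least a constant multiple of $\rho(|U|)$. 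This is exactly what the integral form of $\psi$ provides: $\int_{1.x}^{2x}\rho(t)dt/t\ge \rho(2x)\log(2/(1+\rho))$, and this is comparable to $\rho(x)$ because $\log^2(30x)\le 4\log^2(15x)$ or so. The constant $c$ in $\psi$ is chosen to make this inequality strict. Choosing $c$ and checking these logarithmic comparisons is where the care goes. If the constants are tight, I would first try $c=8$, verify $\psi\ge0$ on $[1,\infty)$ using $\eps=1/16$, and then adjust.

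Finally, $H$ nonempty with $\delta(H)>M/2$ gives all three properties.
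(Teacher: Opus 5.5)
Your argument is correct and is essentially the paper's proof. Once you fix the threshold $a(x)x$ with $a(x)=\frac M2(1+\psi(x))$, the integral evaluates in closed form to $\psi(x)=1-c\eps/\log(15x)$, and $c=16$ gives exactly the paper's $\Psi(x)=x\bigl(2-1/\log(15x)\bigr)$. The structure is the same: a minimum-order choice, a vertex-deletion step for $\delta(H)$, and the split into $H[U\cup S]$ and $H-U$ for expansion. The constant check you postponed does go through; for instance, with $c=8$ one gets $c\rho(2u)\log\frac{2}{1+\rho(u)}\ge3\rho(u)>2\rho(u)$, using $\rho(2u)\ge0.63\rho(u)$. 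You should still discard the false starts ($\Phi$, and the threshold $\frac M2\psi(|F|)|F|$ alone), and note that the minimum-degree step needs $a\ge M/2$ and monotonicity of $a$, not $a\le M$.
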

\begin{proof}
For $x\ge1$, let $\psi(x)=2-1/\log(15x)$ and $\Psi(x)=x\psi(x)$. Writing a prime for differentiation with respect to $x$, we obtain
\[
1<\psi(x)<2,\qquad
1<\Psi'(x)=2-\frac1{\log(15x)}+\frac1{\log^2(15x)}<2.
\]
Since $e(G)>(M/2)\Psi(|G|)$, choose an induced subgraph $H$ of minimum order $N$ satisfying this inequality. Necessarily $N\ge2$, and every nonempty proper $W\subset V(H)$ satisfies $e(H[W])\le M\Psi(|W|)/2$. The bound on $e(H)$ follows from $\psi>1$. For each $v\in V(H)$, minimality and $\Psi'>1$ give $d_H(v)>(M/2)\bigl(\Psi(N)-\Psi(N-1)\bigr)>M/2$.

Let $\varnothing\ne U\subseteq V(H)$, where $u=|U|\le N/2$, and write $S=N_H(U)$ and $s=|S|$. If $U\cup S=V(H)$, then $s=N-u\ge u$, and the required expansion holds. Otherwise both $U\cup S$ and $V(H)\setminus U$ are nonempty proper sets. No edge joins $U$ to $V(H)\setminus(U\cup S)$, so
\[
e(H)\le e(H[U\cup S])+e(H[V(H)\setminus U]).
\]
Together with the preceding bound on proper induced subgraphs, this yields
\[
\Psi(N)-\Psi(N-u)-\Psi(u)
<\Psi(u+s)-\Psi(u)\le2s.
\]
The left side is at least $u(\psi(N)-\psi(u))$, since $\psi$ is increasing. Also $N\ge2u$, and hence
\[
\psi(N)-\psi(u)\ge\frac{\log2}{\log(15u)\log(30u)}
\ge\frac1{4\log^2(15u)}.
\]
The last inequality uses $\log2>1/2$ and $\log(30u)\le2\log(15u)$. Thus the covering inequality gives $s>u/(8\log^2(15u))$, which implies~\eqref{eq:expand}.
\end{proof}

\begin{lemma}\label{lem:connect}
Let $H$ be a weak expander with $|H|\le n$, and let $Z\subseteq V(H)$. Suppose that $X,Y\subseteq V(H)\setminus Z$, $|X|,|Y|\ge x\ge1$, and $|Z|\le\rho(x)x/4$. Then $H-Z$ contains an $X$--$Y$ path of length at most $R(n)$.
\end{lemma}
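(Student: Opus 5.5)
Grow balls around $X$ and around $Y$ in $H-Z$ and show that each ball reaches more than half of $V(H)$ within radius about $R(n)/2$. Two balls each larger than $|H|/2$ must meet, and a shortest path through a common vertex is an $X$--$Y$ path of length at most $R(n)$. If $X\cap Y\ne\varnothing$ the path of length zero works, so assume they are disjoint. A shortest path between the balls can always be trimmed to have no internal vertex in $X\cup Y$.

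Define $B_0=X$ and $B_{i+1}=B_i\cup N_{H-Z}(B_i)$. As long as $|B_i|\le |H|/2$, the expansion property \eqref{eq:expand} in $H$ gives
\[
|N_{H-Z}(B_i)|\ge |N_H(B_i)|-|Z|\ge \rho(|B_i|)|B_i|-\rho(x)x/4 .
\]
The function $u\mapsto\rho(u)u=\eps u/\log^2(15u)$ is increasing for $u\ge1$. Its derivative is $\eps(\log(15u)-2)/\log^3(15u)$, and $\log 15>2$. Since $|B_i|\ge x$, we get $\rho(x)x/4\le\rho(|B_i|)|B_i|/4$. Hence $|B_{i+1}|\ge(1+\tfrac34\rho(|B_i|))|B_i|$.

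Next, bound the number of steps needed to exceed $|H|/2$. Group the steps into dyadic phases according to $|B_i|\in[2^j,2^{j+1})$. In each phase $\rho(|B_i|)\ge\eps/\log^2(15n)$, so a phase takes at most about $\frac{4}{3}\log^2(15n)/\eps\cdot\log 2+1$ steps. There are at most $\log_2 n+1$ phases. The total is at most about $\frac{64\cdot 4\log2}{3}\log^2(15n)\cdot(\log_2 n+1)$, which is roughly $27\log^3(15n)$ after converting $\log_2$ to $\log$. A cleaner bound uses $\log(1+y)\ge y/2$ and sums $\int du/(u\rho(u))$, giving about $\frac{8}{3\eps}\cdot\frac13\log^3(15n)\approx 14\log^3(15n)$. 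Either way, the number of steps is comfortably below $R(n)/2=80\log^3(15n)$. The same bound holds for the balls grown from $Y$.

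Once both balls have size more than $|H|/2$, they intersect at a vertex $w$. The vertex $w$ is joined in $H-Z$ to $X$ and to $Y$ by paths of length at most $R(n)/2$. Their union contains the required path. The main obstacle is the constant bookkeeping: one must check that the step count stays below $R(n)/2$, including the $+1$ rounding per phase and small values of $n$. The case where a ball reaches exactly $|H|/2$ needs care too, since one more step then exceeds half. I would handle this with the integral-style estimate rather than the crude dyadic count, so that there is room to spare.
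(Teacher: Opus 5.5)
Your proposal is correct and is the paper's argument: grow balls from $X$ and $Y$ in $H-Z$, use that $u\rho(u)$ is increasing to absorb $|Z|$ into a quarter of the expansion of each ball, and let two balls of size more than $|H|/2$ meet. The dyadic or integral step count is unnecessary, and so your muddled constants there do not matter. The paper simply uses the uniform bound $\tfrac34\rho(|B_i|)\ge\eps/(2\log^2(15|H|))$ together with $\log(1+\beta)\ge\beta/2$, which gives radius $\lceil 4\eps^{-1}\log^3(15|H|)\rceil$, well within $R(n)/2$; your own uniform factor $1+\tfrac{3\eps}{4\log^2(15n)}$ likewise gives about $43\log^3(15n)$ steps.
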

\begin{proof}
Let $N=|H|$ and $b=\log(15N)$. The function $u\rho(u)$ is increasing for $u\ge1$. For $U\subseteq V(H)\setminus Z$ with $x\le|U|\le N/2$, therefore,
\[
|N_{H-Z}(U)|\ge\rho(|U|)|U|-|Z|
\ge\frac34\rho(|U|)|U|
\ge\frac{\eps}{2b^2}|U|.
\]
Let $\beta=\eps/(2b^2)$. Starting from $X$, balls in $H-Z$ grow by a factor at least $1+\beta$ until they exceed $N/2$ vertices. Since $0<\beta<1$ and $\log(1+\beta)\ge\beta/2$, a ball of radius $T=\left\lceil4\eps^{-1}b^3\right\rceil$ contains more than $N/2$ vertices. Otherwise its order would be at least $\exp(T\eps/(4b^2))\ge15N$. The same holds for the ball grown from $Y$. The two balls meet, giving a path of length at most
\[
2T\le8\eps^{-1}b^3+2\le10\eps^{-1}b^3\le R(n).
\]
A starting set of order greater than $N/2$ already meets the size requirement. If $X\cap Y\ne\varnothing$, a path of length zero suffices.
\end{proof}

\begin{lemma}\label{lem:reservoir}
If $\delta(H)\ge256$, then there are disjoint $A,B\subseteq V(H)$ such that
\[
e(H[A])\ge\frac{e(H)}{128},\qquad
 d_H(v,B)\ge
 \begin{cases}
 d_H(v)/4,&v\in A,\\
 d_H(v)/8,&v\in B.
 \end{cases}
\]
\end{lemma}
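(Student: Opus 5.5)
We will use a random partition followed by a local cleaning step. Assign each vertex independently to one of two classes $P$ or $Q$, with $\Pr[v\in P]=1/2$, say. Call $v$ \emph{good} if it has at least $d_H(v)/3$ neighbours in $Q$. Chernoff bounds together with $\delta(H)\ge256$ make each vertex bad with probability at most about $e^{-d_H(v)/72}$, which is small. The expected number of edges inside $P$ with both endpoints good is then at least about $e(H)/4-\sum_v d_H(v)\Pr[v \text{ bad}]$. We should work instead with the event ``$v$ bad given $uv$'', which only shifts degrees by one. The correlation is mild because the badness of $u$ depends only on the classes of its neighbours. Fix an outcome in which this count is at least $e(H)/8$.

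Next we clean up the vertices of $Q$. Set $B_0=Q$ and repeatedly delete from $B$ any vertex $w$ with $d_H(w,B)<d_H(w)/8$. This is where the argument could fail, since a deletion can cascade and destroy the degrees of vertices in $A$. We control the cascade by charging. Consider the potential $\Phi=\sum_{w\in B}d_H(w)$ together with the edges inside $B$. Each deletion of $w$ removes at most $d_H(w)/8$ edges into the current $B$, while it removes $d_H(w)$ from the degree mass. Standard accounting (the ``$k$-core with fractional threshold'' argument) therefore bounds the total deleted mass by a constant fraction of the mass of vertices that were deficient at the start. The initially deficient vertices of $Q$ are those with fewer than $d_H(w)/8$ neighbours in $Q$. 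With threshold $1/8$ against an expected fraction $1/2$, these have exponentially small probability, so their expected degree mass is at most $e^{-c\cdot256}\cdot 2e(H)$.

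We then take $A$ to be the good vertices of $P$ that keep at least $d_H(v)/4$ neighbours in the final $B$. A good vertex $v$ starts with $d_H(v)/3$ neighbours in $Q$, so it fails only if more than $d_H(v)/12$ of its $Q$-neighbours were deleted. Each deleted vertex $w$ is adjacent to at most $d_H(w)$ vertices of $A$. Summing over the deleted set $D$, the edges of $H[P]$ lost through vertices of $A$ removed in this way number at most $12\sum_{w\in D}d_H(w)$ in degree mass, up to a factor converting mass into edges at $v$; each failing $v$ loses $d_H(v)$ edges. So the loss is at most $12\cdot O(e^{-c\cdot256})e(H)$, which is well below $e(H)/8-e(H)/128$.

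The main obstacle is the second step, namely bounding the cascade in $B$ by the initial deficient mass. The cleaner alternative I would try first is to choose $B$ deterministically as a maximal-weight set: among all $B\subseteq V(H)$, maximise $\sum_{w\in B}\bigl(d_H(w,B)-d_H(w)/4\bigr)$ (or a similar weighted cut functional). Local optimality gives $d_H(w,B)\ge d_H(w)/8$ for $w\in B$. One then takes $A\subseteq V\setminus B$ to be the vertices with at least a quarter of their degree into $B$, and uses optimality (moving vertices into $B$ does not help) to show that the complement is rich in such vertices carrying many internal edges. Failing that, I would fall back on the randomized argument above.
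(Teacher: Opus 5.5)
Your overall plan (random split, peel the reservoir, discard core vertices that lost too many reservoir neighbours, charge all losses to a rare set of badly split vertices) matches the paper's. However, the step you flag as the main obstacle is not merely unproved: as stated, it is false. You charge the cascade to the vertices of $Q$ with fewer than $d_H(w)/8$ neighbours in $Q$. That is exactly the deletion threshold, so there is no slack. A vertex with exactly $\lceil d_H(w)/8\rceil$ neighbours in $Q$ is not deficient, yet it is deleted as soon as one of those neighbours goes. A chain of such vertices lets one deficient vertex trigger an arbitrarily heavy cascade. Writing $\mu(U)=\sum_{u\in U}d_H(u)$ and $L$ for the deleted set, no inequality $\mu(L)\le C\mu(\mathrm{Def})$ can hold.

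The missing idea is to charge against vertices that start well above the deletion threshold, by a margin larger than what the edges of $H[L]$ can absorb. The paper puts vertices in the core with probability only $1/8$ and lets $B^{\times}$ be the vertices with more than $d_H(v)/4$ core neighbours. A deleted vertex outside $B^{\times}$ has then lost more than $5d_H(v)/8$ neighbours to earlier deletions, so $\frac58\mu(L\setminus B^{\times})\le e(H[L])\le\frac12\mu(L)$ and hence $\mu(L)\le5\mu(B^{\times})$. With your split at $1/2$, that crude count would need a threshold above $5d/8$, which is above the mean, so it cannot give a rare set.

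You would need a sharper count, which your own remark almost supplies. Each deleted $w$ has fewer than $d_H(w)/8$ later-deleted neighbours, so $e(H[L])<\mu(L)/8$. It also has more than $d_H(w,Q)-d_H(w)/8$ earlier-deleted neighbours, whence $\sum_{w\in L}d_H(w,Q)<\mu(L)/4$. Charging to $\{w\in Q:d_H(w,Q)<3d_H(w)/8\}$ then bounds $\mu(L)$ by three times the mass of this set. Each vertex lies in this set with probability at most $e^{-d_H(w)/32}$.

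Two further points need repair. First, you fix an outcome after the first step and afterwards invoke the expected deficient mass; once the outcome is fixed, that expectation says nothing. Write a pointwise lower bound such as $e(H[A])\ge e(H[P])-\mu(\text{bad})-C\mu(\text{deficient})$, take its expectation, and choose the outcome only at the end. The paper does exactly this with $\mathbb E e(H[A])\ge e(H)\bigl(\frac1{64}-22e^{-\delta(H)/24}\bigr)$. Second, your deterministic alternative cannot work. For $\Phi(B)=2e(H[B])-\mu(B)/4$, adding $v\notin B$ changes $\Phi$ by $2d_H(v,B)-d_H(v)/4$. So at a maximiser every vertex outside $B$ has at most $d_H(v)/8<d_H(v)/4$ neighbours in $B$, and $A$ is forced to be empty.
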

\begin{proof}
Include each vertex independently in $A_0$ with probability $1/8$, and let $B_0=V(H)\setminus A_0$. For $U\subseteq V(H)$, write $\mu(U)=\sum_{v\in U}d_H(v)$, and define $B^{\times}=\{v:d_H(v,A_0)>d_H(v)/4\}$. For an integer $d\ge0$, if $Z\sim\operatorname{Bin}(d,1/8)$, Markov's inequality gives
\[
\Pr(Z>d/4)\le2^{-d/4}\mathbb E2^Z
=2^{-d/4}(9/8)^d\le e^{-d/24},
\]
where $\log(9/8)\le1/8$ and $\log2\ge2/3$. Consequently, $\mathbb E\mu(B^{\times})\le2e(H)e^{-\delta(H)/24}$.

Starting from $B_0$, repeatedly delete a vertex whose degree in the remaining set is less than one eighth of its original degree in $H$. Let $L$ be the deleted set and $B=B_0\setminus L$. A deleted vertex outside $B^{\times}$ initially has at least $3d_H(v)/4$ neighbours in $B_0$ and fewer than $d_H(v)/8$ at deletion. More than $5d_H(v)/8$ of its neighbours have therefore been deleted earlier. Counting each edge of $H[L]$ at its later-deleted endpoint gives
\[
\frac58\mu(L\setminus B^{\times})\le e(H[L])\le\frac12\mu(L),
\qquad \mu(L)\le5\mu(B^{\times}).
\]
Let $A_{\rm bad}=\{v\in A_0:d_H(v,B)<d_H(v)/4\}$ and $A=A_0\setminus A_{\rm bad}$. Every vertex of $A_{\rm bad}\setminus B^{\times}$ has more than half of its original degree into $L$. Since $A_0\cap L=\varnothing$, $\mu(A_{\rm bad})\le\mu(B^{\times})+2\mu(L)\le11\mu(B^{\times})$. For every outcome, $e(H[A])\ge e(H[A_0])-\mu(A_{\rm bad})$. Hence the bound on $\mathbb E\mu(B^{\times})$ and $\mathbb E e(H[A_0])=e(H)/64$ imply
\[
\mathbb E e(H[A])\ge e(H)\left(\frac1{64}-22e^{-\delta(H)/24}\right)
\ge e(H)/128.
\]
The last inequality holds for $\delta(H)\ge256$. Choose an outcome with at least this many edges. The degree bounds follow from the deletion rule and the definition of $A$.
\end{proof}

\subsection{Clique minors}

For an integer $h\ge1$, $K_h$ denotes the complete graph on $h$ vertices. A \emph{minor} is obtained by deleting vertices or edges and contracting edges, discarding loops and duplicate edges. A \emph{$K_h$ model} in a graph $G$ is a collection of $h$ pairwise disjoint nonempty vertex sets, called its \emph{branch sets}, each inducing a connected subgraph of $G$, with an edge between every pair. A graph contains a $K_h$ minor if and only if it has such a model. A \emph{separation} $(A,B)$ of $G$ satisfies $A\cup B=V(G)$ and has no edge between $A\setminus B$ and $B\setminus A$. Its \emph{separator} is $A\cap B$, its order is $|A\cap B|$, and its two \emph{open sides} are $A\setminus B$ and $B\setminus A$.

We use the following form of the separator theorem of Alon, Seymour and Thomas~\cite[Theorem~(1.2)]{AST}.

\begin{lemma}\label{lem:AST}
If an $m$-vertex graph has no $K_h$ minor, its vertex set has a partition $A\dcup Z\dcup B$ with no edge between $A$ and $B$, where $|A|,|B|\le2m/3$ and $|Z|\le h^{3/2}\sqrt m$.
\end{lemma}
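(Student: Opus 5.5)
This statement is the Alon–Seymour–Thomas separator theorem, cited as \cite[Theorem~(1.2)]{AST}, so the plan is to derive the stated form from that citation rather than reprove it. The published theorem says: if an $m$-vertex graph $G$ has no $K_h$ minor, then there is a set $Z\subseteq V(G)$ with $|Z|\le h^{3/2}m^{1/2}$ such that every component of $G-Z$ has at most $m/2$ vertices. The lemma asks for two sides of order at most $2m/3$ with no edges between them, so the only work is to group the components of $G-Z$.

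Let $D_1,\ldots,D_s$ be the components of $G-Z$, ordered so that $|D_1|\ge|D_2|\ge\cdots\ge|D_s|$, and write $w=m-|Z|$ for their total order. Build $A$ greedily: add components in this order while $|A|\le w/3$, stopping at the first moment $|A|>w/3$ or when the components are exhausted. Put all remaining components into $B$. Every edge of $G-Z$ lies inside a single component, so there is no edge between $A$ and $B$, and $A\dcup Z\dcup B$ is a partition of $V(G)$.

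For the size bounds, first suppose $|D_1|\ge w/3$. Then $A=D_1$, so $|A|\le m/2\le2m/3$, and $|B|=w-|A|\le 2w/3\le2m/3$. Otherwise every component has fewer than $w/3$ vertices. At the stopping moment the final component added to $A$ was added while $|A|\le w/3$, so $|A|<w/3+w/3=2w/3$. If the process stopped because $|A|>w/3$, then $|B|=w-|A|<2w/3$. If instead the components ran out, then $B=\varnothing$. In every case $|A|,|B|\le2m/3$.

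There is no real obstacle; the one point to check is that the cited version's component bound $m/2$ (or $2m/3$ in some statements) is enough. The greedy step above only uses that each component has at most $2m/3$ vertices, so either version of the citation suffices.
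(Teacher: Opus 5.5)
Your proposal is correct and matches the paper, which gives no argument of its own and simply cites Alon--Seymour--Thomas, Theorem~(1.2), for this statement; your greedy grouping of the components of $G-Z$ is the standard conversion from the component form of their theorem and adds nothing problematic. One tiny slip: if $|D_1|=w/3$ exactly, the process does not stop after $D_1$, so your claim $A=D_1$ fails there; making the first case $|D_1|>w/3$ fixes this, since then every component has at most $w/3$ vertices and your second-case bound $|A|\le 2w/3$ applies unchanged.
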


For $Y\subseteq V(J)$ with $|Y|=p$, a $K_p$ model is \emph{rooted at $Y$} if every branch set contains exactly one vertex of $Y$. We also use the following special case of Nguyen~\cite[Lemma~5.7]{Nguyen}.

\begin{lemma}\label{lem:rooted}
Let $Y\subseteq V(J)$ have size $p\ge1$, and suppose that $J-Y$ contains a $K_h$ model with $h\ge2p$. If there is no separation $(A,B)$ of $J$ of order less than $p$ such that $Y\subseteq A$ and a whole branch set of this model is contained in $B\setminus A$, then $J$ has a $K_p$ model rooted at $Y$.
\end{lemma}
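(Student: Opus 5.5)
The plan is to find $p$ pairwise disjoint paths $P_1,\ldots,P_p$ with these properties: $P_i$ starts at the $i$-th vertex $y_i$ of $Y$; it ends in a branch set $B_i$ of the given $K_h$ model; the ends lie in pairwise distinct branch sets; and no path meets any branch set except at its last vertex. Given such paths, the sets $V(P_i)\cup B_i$ ($1\le i\le p$) are pairwise disjoint and each induces a connected subgraph. Any two of them are joined by an edge, because $B_i$ and $B_j$ are. Each contains exactly one vertex of $Y$, since $Y$ is disjoint from the model and $P_i$ meets $Y$ only in $y_i$. They therefore form a $K_p$ model rooted at $Y$. Everything reduces to producing this linkage, and the plan is to do it with Menger's theorem.

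Let $M$ be the union of the $h$ branch sets. As a first approximation, apply Menger between $Y$ and $M$ in $J$. If there are no $p$ disjoint $Y$--$M$ paths, there is a separation $(A,B)$ of order less than $p$ with $Y\subseteq A$ in which every $Y$--$M$ path meets $A\cap B$. The separator meets fewer than $p$ branch sets, so at least $h-p+1\ge p+1$ branch sets avoid it. Each such set is connected and is not reachable from $Y$ in $J-(A\cap B)$, so it lies entirely in $B\setminus A$. This contradicts the hypothesis, so $p$ disjoint $Y$--$M$ paths exist. Being $Y$--$M$ paths, they automatically touch $M$ only at their ends.

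The remaining issue is distinctness of the branch sets containing the ends. To force it, I would run Menger in an auxiliary graph $J^*$ instead. Start from $J-M$ and add one new vertex $t_\ell$ for each branch set $B_\ell$, joined to $N_J(B_\ell)\setminus M$ and also to $B_\ell\cap N_J(Y)$ via $Y$ directly. Then take $T=\{t_1,\ldots,t_h\}$ as the target. Each $t_\ell$ has capacity one, so $p$ disjoint $Y$--$T$ paths in $J^*$ lift to the required paths in $J$: replace the final step into $t_\ell$ by an edge into $B_\ell$. If instead $J^*$ has a $Y$--$T$ separator $S$ with $|S|<p$, then $S\cap T$ has fewer than $p$ vertices. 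Hence at least $p+1$ branch sets $B_j$ have $t_j\notin S$, and these lie on the far side of $S$ in $J^*$.

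The main obstacle is converting $S$ into a separation of $J$ of order less than $p$ that has a whole branch set on the far side. The danger is a branch set $B_\ell$ with $t_\ell\in S$. It is adjacent in $J$ to every other branch set, so $Y$ might reach $B_\ell$ and then pass through it to an untouched $B_j$. Simply putting all such $B_\ell$ into the separator could make it too large. My first attempt would be to replace each $t_\ell\in S$ by a single vertex of $J$, using an uncrossing or extremal choice of $S$. Concretely, I would take $S$ closest to $T$ and replace $t_\ell$ by the unique vertex at which the Menger paths enter $B_\ell$. I would then argue, using $h\ge2p$ to keep at least $p+1$ untouched branch sets available, that the modified set still separates $Y$ from some untouched branch set in $J$. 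If this fails, the fallback is an augmenting-path argument on a system of $p$ disjoint $Y$--$M$ paths that maximises the number of distinct branch sets hit. Suppose two paths end in the same $B_\ell$. Then the residual graph yields either an augmentation, which increases the number of distinct branch sets hit, or a separation of order less than $p$. In the latter case at least $p+1$ branch sets avoid the separator and are unreachable from $Y$ in $J$ minus the separator, exactly as in the first approximation above, and this contradicts the hypothesis.
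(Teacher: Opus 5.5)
There is a genuine gap, and it is the step you flag as the main obstacle. The linkage you reduce everything to is \emph{not} implied by the hypothesis, so neither the closest-separator repair nor the augmenting-path fallback can be completed. Your first paragraph gives a valid sufficient condition, and your Menger argument for $p$ disjoint $Y$--$M$ paths is correct. But nothing forces those paths into distinct \emph{original} branch sets.

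Here is a concrete example. Take $p=2$ and $h=4$. Let $J-Y$ be a complete graph on $\{u_1,u_2,b_2,b_3,b_4\}$ with branch sets $\{u_1,u_2\},\{b_2\},\{b_3\},\{b_4\}$, and let $y_i$ be adjacent only to $u_i$. Then $J$ is connected, and for every vertex $c$, each component of $J-c$ contains a vertex of $Y\setminus\{c\}$. So every separation of order less than $2$ with $Y\subseteq A$ has $B\setminus A=\varnothing$, and the hypothesis holds with $h=2p$. Yet every $Y$--$M$ path ends in $\{u_1,u_2\}$. Your $J^*$ therefore has the one-vertex $Y$--$T$ separator $\{t_1\}$, and the required system of paths does not exist. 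The lemma's conclusion still holds, via $\{y_1,u_1\},\{y_2,u_2\}$, a rooted model that splits an original branch set.

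The example also shows where the fallback breaks. The obstruction corresponds in $J$ to the whole branch set $\{u_1,u_2\}$, which has $p$ vertices, not fewer. Moreover, $Y$ reaches every other branch set through it, so no separation contradicting the hypothesis is produced.

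What is missing is an argument in which the rooted branch sets may cut through, or be rerouted inside, the original branch sets, and may use the at least $h-p$ branch sets not hit by the $Y$--$M$ paths. This is the genuinely nontrivial content of Robertson--Seymour-type linkage results. It needs a global argument, for example a minimal-counterexample argument that contracts edges inside branch sets and analyses the resulting separations of order $p$; a single application of Menger's theorem does not suffice.

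The paper does not reprove this. It applies Nguyen's Lemma~5.7 with $m=h$, $t=p$, $n=0$, whose separation hypothesis is exactly the one in the statement. This gives a $K_{h-p}$ model in which $p$ branch sets each contain exactly one vertex of $Y$. Since $h-p\ge p$, the paper keeps those $p$ branch sets.
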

\begin{proof}
Apply~\cite[Lemma~5.7]{Nguyen} with its parameters $m=h$, $t=p$ and $n=0$. The given branch sets are connected, disjoint from $Y$, and pairwise adjacent. Its separation hypothesis is exactly the one above. The conclusion is a $K_{h-p}$ model attached to $Y$, meaning that $p$ of its branch sets each contain exactly one vertex of $Y$ and together contain all of $Y$. Since $h-p\ge p$, retaining those $p$ branch sets gives the required rooted model.
\end{proof}

The separation condition in Lemma~\ref{lem:rooted} refers to a whole branch set on the opposite side, not merely to a vertex of that set. We verify this condition directly when applying the lemma. Related separation-to-linkage formulations appear in~\cite[Theorem~(5.4)]{RS} and~\cite[Theorem~4.1]{KKR}.

\section{A local construction}\label{sec:local}

This section proves the short-family result used when a small set is too dense. The graph under consideration may have fewer than $N$ vertices, but all local bounds and path lengths will be measured at the same fixed scale $N$.

\subsection{Private trees and an additional outer cycle}

We use the standard extendability formulation of Montgomery~\cite[Definition~3.1 and Lemma~3.3]{Montgomery}. We include the leaf-extension proof, which in~\cite{Montgomery} is attributed to Glebov, Johannsen and Krivelevich.

\begin{definition}\label{def:extend}
Let $D\ge3$ and $m\ge1$ be integers. A subgraph $S\subseteq J$ is \emph{$(D,m)$-extendable} if $\Delta(S)\le D$ and, for every $U\subseteq V(J)$ with $|U|\le2m$,
\begin{equation}\label{eq:extend}
|\Gamma_J(U)\setminus V(S)|
\ge(D-1)|U|-\sum_{v\in U\cap V(S)}(d_S(v)-1).
\end{equation}
\end{definition}

The subgraph $S$ in this definition need not be induced. The degrees in the sum are its degrees in $S$.

\begin{lemma}\label{lem:leaf}
Suppose that $S\subseteq J$ is $(D,m)$-extendable and that every $U\subseteq V(J)$ with $m\le|U|\le2m$ satisfies
\[
|\Gamma_J(U)|\ge |V(S)|+2Dm+1.
\]
If $v\in V(S)$ and $d_S(v)\le D-1$, then there is a neighbour $y\notin V(S)$ of $v$ such that adding the vertex $y$ and the edge $vy$ to $S$ preserves $(D,m)$-extendability.
\end{lemma}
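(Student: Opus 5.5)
The plan is the standard leaf-extension argument: show that only \emph{tight} sets can be violated by adding one edge, show that tight sets are small and closed under union, and pick $y$ outside the neighbourhood of the largest relevant tight set.

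\textbf{Deficiency function.} For $U\subseteq V(J)$ define
\[
f_S(U)=|\Gamma_J(U)\setminus V(S)|-(D-1)|U|+\sum_{v\in U\cap V(S)}(d_S(v)-1).
\]
Extendability of $S$ says $f_S(U)\ge0$ for all $|U|\le2m$. Call $U$ \emph{critical} if $|U|\le 2m$ and $f_S(U)=0$. The map $U\mapsto|\Gamma_J(U)\setminus V(S)|$ is submodular, being a coverage function. The remaining terms are modular in $U$. Hence $f_S$ is submodular.

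\textbf{Critical sets have fewer than $m$ vertices.} Take $m\le|U|\le2m$. The hypothesis gives
\[
|\Gamma_J(U)\setminus V(S)|\ge|\Gamma_J(U)|-|V(S)|\ge2Dm+1>(D-1)|U|.
\]
The sum in $f_S$ is nonnegative, so $f_S(U)>0$.

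\textbf{Closure under union.} Let $U_1,U_2$ be critical. Then $|U_1\cup U_2|<2m$, so both $f_S(U_1\cup U_2)$ and $f_S(U_1\cap U_2)$ are nonnegative; note $f_S(\varnothing)=0$. Submodularity gives
\[
f_S(U_1\cup U_2)+f_S(U_1\cap U_2)\le f_S(U_1)+f_S(U_2)=0,
\]
so $U_1\cup U_2$ is critical. In particular, the union $U^*$ of all critical sets not containing $v$ is itself critical (possibly empty), satisfies $|U^*|<m$, and does not contain $v$.

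\textbf{Choice of $y$.} Apply extendability to $U^*\cup\{v\}$, which has at most $m\le2m$ vertices. Since $v\notin U^*$, $f_S(U^*\cup\{v\})\ge0$ and $f_S(U^*)=0$ give
\[
|N_J(v)\setminus(V(S)\cup\Gamma_J(U^*))|\ge(D-1)-(d_S(v)-1)=D-d_S(v)\ge1.
\]
Choose such a neighbour $y$ and set $S'=S+vy$. Then $\Delta(S')\le D$, because $d_{S'}(v)\le D$ and $d_{S'}(y)=1$.

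\textbf{Checking extendability of $S'$.} Fix $U$ with $|U|\le2m$ and compare $f_{S'}(U)$ with $f_S(U)$.
\begin{enumerate}
\item The first term of $f$ drops by one exactly when $y\in\Gamma_J(U)$.
\item The sum rises by one if $v\in U$, since $d_S(v)$ increases by one.
\item The sum is unchanged if $y\in U$, because $y$ contributes $d_{S'}(y)-1=0$.
\end{enumerate}
Thus $f_{S'}(U)\ge f_S(U)-1$, and $f_{S'}(U)<0$ would force all three of the following: $y\in\Gamma_J(U)$, $v\notin U$, and $f_S(U)=0$. Such a $U$ is critical and avoids $v$, so $U\subseteq U^*$ and $y\in\Gamma_J(U^*)$, contradicting the choice of $y$.

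The main obstacle is choosing the right family of critical sets to exclude. Excluding all critical sets fails when $v$ itself lies in some critical set. Restricting to critical sets avoiding $v$ matches exactly the sets where the defect can occur, and the single extendability inequality for $U^*\cup\{v\}$ then supplies $y$.
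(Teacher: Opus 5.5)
Your argument is correct and is the paper's argument, written directly instead of by contradiction. The paper assumes every $y\in N_J(v)\setminus V(S)$ fails, so each such $y$ yields a tight set $U_y$ with $v\notin U_y$ and $y\in\Gamma_J(U_y)$. It takes $W=\bigcup_y U_y$ and computes $F_S(W\cup\{v\})=d_S(v)-D<0$. Your $U^*$ plays the role of $W$, and your extendability inequality at $U^*\cup\{v\}$ is the same computation read forwards.

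One step needs a small correction. You claim the sum $\sum_{u\in U\cap V(S)}(d_S(u)-1)$ is nonnegative, but nothing in the definition of extendability rules out isolated vertices of $S$ (for instance, $S$ a single vertex), and for those $d_S(u)-1=-1$. Bound the sum below by $-|U|$ instead. The $+1$ slack in $2Dm+1$ absorbs this exactly: for $m\le|U|\le2m$ you get $f_S(U)\ge 2Dm+1-D|U|\ge1$, which is how the paper argues. The rest of your proof then goes through unchanged.
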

\begin{proof}
For $U\subseteq V(J)$, define
\[
F_S(U)=|\Gamma_J(U)\setminus V(S)|-(D-1)|U|
+\sum_{u\in U\cap V(S)}(d_S(u)-1).
\]
The function $F_S$ is \emph{submodular}: $F_S(U)+F_S(W)\ge F_S(U\cup W)+F_S(U\cap W)$ for all $U,W\subseteq V(J)$. Indeed, the neighbourhood of a union is the union of the neighbourhoods, whereas the neighbourhood of an intersection is contained in their intersection. The other two terms are modular, meaning that they satisfy equality in this set-function inequality. Call $U$ \emph{tight} if $|U|\le2m$ and $F_S(U)=0$.

Every tight set has size less than $m$. Otherwise, the hypothesis and $\sum_{u\in U\cap V(S)}(d_S(u)-1)\ge-|U|$ would give $F_S(U)\ge2Dm+1-D|U|\ge1$. If $U,W$ are tight, then $|U\cup W|<2m$, and submodularity together with extendability gives
\[
0\le F_S(U\cup W)+F_S(U\cap W)\le F_S(U)+F_S(W)=0.
\]
Thus $U\cup W$ is tight and has size less than $m$. It follows by induction that the union of any finite collection of tight sets is tight and has size less than $m$.

Suppose that every $y\in N_J(v)\setminus V(S)$ is unsuitable. This set is nonempty, since~\eqref{eq:extend} applied to $\{v\}$ gives at least $D-d_S(v)\ge1$ choices. Write $S_y=S+vy$ for the subgraph with vertex set $V(S)\cup\{y\}$ and edge set $E(S)\cup\{vy\}$. For every $U\subseteq V(J)$,
\[
F_{S_y}(U)=F_S(U)+\mathbf1_{\{v\in U\}}-\mathbf1_{\{y\in\Gamma_J(U)\}}.
\]
The maximum-degree condition still holds. Since $F_S(U)$ is a nonnegative integer for $|U|\le2m$, failure of extendability for $S_y$ gives a tight set $U_y$ with $v\notin U_y$ and $y\in\Gamma_J(U_y)$.

Let $W=\bigcup_{y\in N_J(v)\setminus V(S)}U_y$. Then $v\notin W$, $|W|<m$, $F_S(W)=0$, and $N_J(v)\setminus V(S)\subseteq\Gamma_J(W)\setminus V(S)$. Adding $v$ to $W$ introduces no new neighbour outside $V(S)$. Hence
\[
F_S(W\cup\{v\})=F_S(W)-(D-1)+(d_S(v)-1)=d_S(v)-D<0.
\]
As $|W\cup\{v\}|\le m$, this contradicts extendability.
\end{proof}

In a rooted tree, a vertex's depth is the number of edges on its path from the root; its children are its neighbours at the next depth. A complete binary tree of depth $h$ has its root at depth zero, two children at every vertex of depth less than $h$, and $2^h$ leaves at depth $h$. In the next lemma the word \emph{private} means that the trees have disjoint vertex sets; extra edges between them in the ambient graph are allowed.

\begin{lemma}\label{lem:trees}
Let $H\subseteq G$, let $M>0$, and let $A,B\subseteq V(H)$ be disjoint. Suppose that $d_H(v,B)>M/16$ for every $v\in A\cup B$. Let $C\subseteq H[A]$ be a cycle of length $q$. For an integer $h\ge1$, let $s=2^h$ and $m=4qs$. If every $W\subseteq V(G)$ satisfies
\begin{equation}\label{eq:localsparse}
e(G[W])\le(M/1024)|W|\qquad(0<|W|\le26m),
\end{equation}
then every vertex of $C$ has two distinct neighbours in $B$, each of which is the root of a complete binary tree of depth $h$ in $H[B]$, and all $2q$ trees are pairwise vertex-disjoint.
\end{lemma}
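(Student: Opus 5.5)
The plan is to build all $2q$ trees simultaneously by greedy leaf extension, applying Lemma~\ref{lem:leaf} in an auxiliary graph in which every new vertex is forced to lie in $B$. Let $J=H[V(C)\cup B]$ and take $D=4$ with $m=4qs$ as in the statement. We start from $S=C$. We then grow $S$ one edge at a time. First each vertex of $C$ receives two new neighbours, which will be the roots. Then each root, and afterwards each internal tree vertex, receives two children, until depth $h$ is reached.

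\textbf{Degrees and vertex count.} Every vertex of $C$ ends with degree $4=D$ in $S$, and every tree vertex ends with degree at most $3$. So each extension is applied to a vertex $v$ with $d_S(v)\le D-1$, as Lemma~\ref{lem:leaf} requires. Since $V(C)\subseteq V(S)$ from the start, each new vertex $y\notin V(S)$ lies in $B$. Hence all roots lie in $B$, each tree lies in $H[B]$, and the trees are automatically pairwise vertex-disjoint. Throughout,
\[
|V(S)|\le q+2q(2s-1)<4qs=m .
\]

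\textbf{Initial extendability.} We must show that $S=C$ is $(4,m)$-extendable. Since $d_S=2$ on $V(C)$, condition~\eqref{eq:extend} reads
\[
|\Gamma_J(U)\setminus V(C)|\ge3|U|-|U\cap V(C)| .
\]
It suffices to show $|\Gamma_J(U)\cap B|\ge3|U|$ for every $U$ with $|U|\le2m$. Suppose not, and let $T=U\cup(\Gamma_J(U)\cap B)$, so that $|T|<4|U|\le8m\le26m$. Every vertex of $U\subseteq V(C)\cup B\subseteq A\cup B$ has more than $M/16$ neighbours in $B$, all of them in $T$. Counting edges, each at most twice, gives $e(G[T])\ge e(H[T])>M|U|/32$. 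On the other hand~\eqref{eq:localsparse} gives $e(G[T])\le M|T|/1024<M|U|/256$, a contradiction.

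\textbf{Expansion hypothesis of Lemma~\ref{lem:leaf}.} For $m\le|U|\le2m$ we need $|\Gamma_J(U)|\ge|V(S)|+8m+1$, and it suffices to show $|\Gamma_J(U)|\ge 9m+1$. If this fails, then $T=U\cup\Gamma_J(U)$ has $|T|<11m+1\le 26m$. Again $e(G[T])>M|U|/32\ge Mm/32$, while~\eqref{eq:localsparse} gives $e(G[T])\le 11Mm/1024$, a contradiction. This bound does not depend on $S$, so it holds at every stage, and extendability is preserved by Lemma~\ref{lem:leaf} at each step.

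The main obstacle is to check that both density arguments fit within the scale $26m$ permitted by~\eqref{eq:localsparse}, with the constants $M/16$ versus $M/1024$. The margins shown above are comfortable. A secondary check is that $J$ is a subgraph of $G$, so that the sparsity bound applies to sets inside $J$. It is, since $J=H[V(C)\cup B]\subseteq H\subseteq G$.
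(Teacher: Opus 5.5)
Your proof is correct and follows the paper's argument. You apply leaf extension via Lemma~\ref{lem:leaf} with $D=4$ in $J=H[V(C)\cup B]$, starting from $S=C$, and obtain both initial extendability and the large-set hypothesis from the local edge bound. The only difference is cosmetic: the paper derives the single inequality $|\Gamma_J(U)\cap B|\ge12|U|$ for all $0<|U|\le2m$ and reads off both hypotheses from it, whereas you run the density count twice, with targets $3|U|$ and $9m+1$.
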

\begin{proof}
Let $X=V(C)$ and $J=H[X\cup B]$. For nonempty $U\subseteq V(J)$ with $|U|\le2m$, write $Y=\Gamma_J(U)\cap B$. By the degree assumption, the sum of the degrees from $U$ into $B$ exceeds $M|U|/16$. Each edge is counted at most twice, so $e(G[U\cup Y])>M|U|/32$. If $|Y|<12|U|$, then $|U\cup Y|<13|U|\le26m$, and~\eqref{eq:localsparse} gives $e(G[U\cup Y])<13M|U|/1024<M|U|/32$, a contradiction. Therefore
\begin{equation}\label{eq:expandB}
|\Gamma_J(U)\cap B|\ge12|U|\qquad(0<|U|\le2m).
\end{equation}

Begin with the working subgraph $S=C$. Since $B\cap X=\varnothing$ and every vertex of $C$ has degree two in $S$, inequality~\eqref{eq:expandB} implies that $S$ is $(4,m)$-extendable. The completed subgraph will have $q+2q(2^{h+1}-1)=4qs-q<m$ vertices. Thus every intermediate subgraph has fewer than $m$ vertices. For $m\le|U|\le2m$, inequality~\eqref{eq:expandB} gives $|\Gamma_J(U)|\ge12m\ge |V(S)|+8m+1$, so Lemma~\ref{lem:leaf} remains applicable throughout the construction.

Add two root edges at every vertex of $C$, and then add the edges of the desired trees one leaf at a time. Before each addition, the parent has degree at most three in $S$: each cycle vertex finishes with degree four, and each tree vertex with degree at most three. Every new vertex is in $B$, because all vertices of $X$ already belong to $S$. This gives the required trees.
\end{proof}

When $C$ is the outermost cycle of an existing family, we use only its edges in the working subgraph. All vertices of the other cycles are already in $V(C)$, so no old vertex can be selected as a new tree vertex.

\begin{lemma}\label{lem:wrap}
Let $H$ be a weak expander with $|H|\le n$, and let $C=x_1\cdots x_qx_1$ be a cycle of length $q\ge3$, listed in its cyclic order. Suppose that each $x_i$ is joined to the roots of two complete binary trees $T_i^-,T_i^+$ of integer depth $h$, all trees pairwise vertex-disjoint and outside $V(C)$. The signs $-$ and $+$ distinguish the two trees at a vertex. Indices on $x_i$ and $T_i^\pm$ are read modulo $q$, so $x_{q+1}=x_1$ and $T_{q+1}^\pm=T_1^\pm$. Let $s=2^h$, $p=\lfloor h/2\rfloor$, and $R_0=R(n)$. If $h\ge2$ and
\begin{align}
q(R_0+1)&\le2^{p-1},\label{eq:B1}\\
q+4q2^p+q(R_0+2h+2)&\le\frac{\eps s}{8\log^2(15s)},\label{eq:B2}
\end{align}
then $H$ has a cycle $C^+$ containing $V(C)$ in the cyclic order of $C$, with no edge in $H[V(C)]$, and with $|C^+|\le q(R_0+2h+2)$.
\end{lemma}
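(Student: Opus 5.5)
We build $C^+$ by joining, for each $i$, the tree $T_i^+$ to the tree $T_{i+1}^-$ by a short path, one index at a time. Each link runs from $x_i$ through $T_i^+$, along a connecting path, and through $T_{i+1}^-$ to $x_{i+1}$. The union of the $q$ links is a cycle through $x_1,\dots,x_q$ in this cyclic order. It uses no edge of $H[V(C)]$, because every link leaves $x_i$ through the root edge into $T_i^+$ and its interior avoids $V(C)$. Each link will use at most $h+1$ edges in each tree (root edge plus a root-to-leaf path) and at most $R_0$ connecting edges. So each link has length at most $R_0+2h+2$, and hence $|C^+|\le q(R_0+2h+2)$.

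The links must be internally disjoint from one another and must avoid $V(C)$ and all other trees. To keep the trees usable after some vertices are forbidden, we use only their lower halves. Each $T_i^\pm$ has $2^p$ vertices at depth $p$, and below each such vertex sits a subtree of depth $h-p\ge p$ with at least $2^{h-p}\ge s/2^p$ leaves. We process $i=1,\dots,q$ in turn, maintaining a forbidden set $Z$. This set consists of $V(C)$, the vertices of the upper parts (depth at most $p$) of all $2q$ trees, and the connecting paths already chosen. Then $|Z|\le q+4q2^p+q(R_0+2h+2)$, a crude bound that also covers the tree vertices of earlier links. When handling index $i$, call a depth-$p$ vertex of $T_i^+$ \emph{blocked} if its subtree meets an earlier connecting path. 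Each earlier path has at most $R_0+1$ vertices, so at most $q(R_0+1)\le2^{p-1}$ depth-$p$ vertices are blocked, by~\eqref{eq:B1}. Hence at least half of them are free. The same holds for $T_{i+1}^-$, and also for $T_1^-$ at the final step $i=q$, where that tree has already been partially used.

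Let $X_i$ be the union of the free lower subtrees of $T_i^+$, and $Y_i$ the corresponding union for $T_{i+1}^-$. Each has at least $2^{p-1}\cdot 2^{h-p}= s/2$ vertices. We then apply Lemma~\ref{lem:connect} to $X_i$, $Y_i$, the set $Z$ with these sets removed, and $x=s/2$. The hypothesis $|Z|\le\rho(x)x/4$ follows from~\eqref{eq:B2}, since $\rho(s/2)(s/2)/4\ge \eps s/(8\log^2(15s))$. This gives an $X_i$--$Y_i$ path $P_i$ of length at most $R_0$ in $H-Z$. Its endpoints lie in free subtrees, and we extend it up the tree paths to the roots and then to $x_i$ and $x_{i+1}$. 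Because $P_i$ is an $X_i$--$Y_i$ path, its interior avoids both sets. It may still pass through other lower-tree vertices. We count such vertices as part of the path when blocking later subtrees, which the bound $q(R_0+1)$ already allows for.

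The main obstacle is the bookkeeping of disjointness. We must ensure that the tree segments used by different links do not collide. In particular, the path $T_i^-$ used by link $i-1$ and the path $T_i^+$ used by link $i$ lie in different trees, so they meet only at $x_i$. We must also ensure that later connecting paths avoid earlier tree segments, which holds because earlier segments lie in $Z$ or in blocked subtrees. Finally, the closing link must respect the choices already made in $T_1^-$. If the accounting in~\eqref{eq:B1} turns out to be too tight, the fallback is to forbid entire used lower subtrees, which still leaves at least half of them free.
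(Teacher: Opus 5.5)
Your proposal is correct and follows essentially the paper's argument. You forbid $V(C)$, all tree vertices of depth at most $p$, and everything used by earlier links; you use \eqref{eq:B1} to show that earlier core paths spoil at most half of each fresh tree; and you apply Lemma~\ref{lem:connect} with $x=s/2$ via \eqref{eq:B2}. The one difference is that you take the whole free depth-$p$ subtrees as endpoint sets, which makes the trimming step in the paper (live leaves, then cutting $Q$ between its last vertex in $F$ and first in $F'$) unnecessary.

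Two small wording points. First, no earlier link uses $T_1^-$ before the last step; only earlier connecting paths may pass through it, and your blocking count already handles that. Second, earlier tree segments are protected because they lie in $Z$, not because of any blocking.
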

\begin{proof}
Write $X=V(C)$. Protect all vertices at depths $0,\ldots,p$ in every private tree, and let $P^*$ be their union. Then $|P^*|=2q(2^{p+1}-1)<4q2^p$. We construct paths $S_i$ from $x_i$ to $x_{i+1}$, using $T_i^+$ and $T_{i+1}^-$ at step $i$. Each private tree is used at exactly one step. Each $S_i$ consists of two root edges, two tree paths of length at most $h$, and a \emph{core path} $P_i$ of length at most $R_0$. The core, including its endpoints, will avoid $X$, $P^*$ and the interiors of all previously chosen paths.

Suppose that $S_1,\ldots,S_{i-1}$ have been chosen, and let $Q_{i-1}=\bigcup_{1\le l<i}V(P_l)$ and $I_{i-1}=\bigcup_{1\le l<i}\Int(S_l)$. Empty unions are understood to be empty, so $Q_0=I_0=\varnothing$. We have $|Q_{i-1}|\le q(R_0+1)$ and $|I_{i-1}|\le q(R_0+2h+2)$. A leaf of an unused tree is \emph{live} if its root-to-leaf path avoids $Q_{i-1}$. Every vertex of $Q_{i-1}$ in that tree has depth at least $p+1$, because all previous cores avoid $P^*$. Such a vertex lies on at most $s/2^{p+1}$ root-to-leaf paths. By~\eqref{eq:B1}, each unused tree therefore has at least $s-|Q_{i-1}|s/2^{p+1}\ge3s/4$ live leaves.

In each of the two current trees, take the union of all root-to-live-leaf paths, and denote the resulting rooted subtrees by $F,F'$. They are disjoint and avoid $I_{i-1}$. Indeed, they avoid old cores by liveness, and the old tree paths lie in different private trees. Their live leaves lie outside $Z_i=X\cup P^*\cup I_{i-1}$. By~\eqref{eq:B2} and the bounds on $|P^*|$ and $|I_{i-1}|$,
\[
|Z_i|\le\frac{\eps s}{8\log^2(15s)}
\le\frac{\rho(s/2)(s/2)}4.
\]
Lemma~\ref{lem:connect} gives a path $Q$ of length at most $R_0$ between the two live-leaf sets in $H-Z_i$.

The path $Q$ is allowed to meet non-leaf vertices of the current trees. Traverse $Q$ from its endpoint in $F$. Let $b$ be the first vertex of $F'$ encountered, and let $a$ be the last vertex of $F$ encountered before $b$. Then $P_i=Q[a,b]$ has its interior disjoint from $F\cup F'$. Append the unique root-to-$a$ and root-to-$b$ paths in $F,F'$, together with the two root edges. The resulting $S_i$ is a simple $x_i$--$x_{i+1}$ path of length at most $R_0+2h+2$.

Its core and its tree paths avoid $I_{i-1}$, and its interior avoids $X$. Moreover $P_i$, including its endpoints, still avoids $P^*$, so the live-leaf estimate remains valid at the next step. The used-vertex sets are updated by $Q_i=Q_{i-1}\cup V(P_i)$ and $I_i=I_{i-1}\cup\Int(S_i)$; the discarded parts of $Q$ are not used.

Thus $S_1,\ldots,S_q$ have pairwise disjoint interiors, join consecutive vertices of $C$, and have no internal vertex in $X$. Their union is a simple cycle in the required order. Every root or tree edge has an endpoint outside $X$, and every core avoids $X$. The new cycle therefore has no edge in $H[X]$, and its length is at most $q(R_0+2h+2)$.
\end{proof}

\subsection{Short families under a local edge bound}

For integers $i\ge1$, define $M_i=2048\cdot256^{i-1}$ and $A_i=6\cdot3200^{i-1}$. For a fixed integer $j\ge1$ and a real ambient scale $N\ge15$, let
\begin{equation}\label{eq:localfunctions}
a_j=20j+40,\qquad
\tau_j(N)=(\log N)^{a_j},\qquad
P_i(N)=A_i(\log N)^{3i-2}\quad(1\le i\le j).
\end{equation}
The quantity $\tau_j(N)$ is the cutoff for local vertex-set sizes, and $P_i(N)$ is the proposed outer-cycle length bound. In this subsection, asymptotic estimates are taken as $N$ tends to infinity, with $j$ fixed.

\begin{lemma}\label{lem:localshort}
For every fixed $j\ge1$, there is $N_j\ge15$ such that the following holds for $N\ge N_j$, $\lambda\ge1$ and $1\le i\le j$. Let $F$ be a graph with at most $N$ vertices. If $e(F)>M_i\lambda|F|$ and, for every $W\subseteq V(F)$,
\begin{equation}\label{eq:LS}
e(F[W])\le\lambda|W|\qquad(0<|W|\le\tau_j(N)),
\end{equation}
then $F$ contains $i$ nested cycles without geometric crossings, with outermost cycle of length at most $P_i(N)$. We may also require $P_i(N)\le\tau_j(N)$ for every $i\le j$ and $N\ge N_j$.
\end{lemma}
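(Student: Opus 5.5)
We argue by induction on $i$, with $j$, $N$ and $\lambda$ fixed throughout. The inequality $P_i(N)\le\tau_j(N)$ is immediate for large $N$, since $3i-2<a_j$ whenever $i\le j$. Hence every outermost cycle we build has length at most $\tau_j(N)$, and the local bound~\eqref{eq:LS} applies to sets of size $O(P_i(N)\cdot\text{polylog})$.

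\emph{Base case $i=1$.} Here $e(F)>2048\lambda|F|$, so Lemma~\ref{lem:prune} gives a nonempty subgraph of minimum degree at least $3$. Lemma~\ref{lem:shortcycle} then provides a cycle of length at most $2\log_2N+3\le 6\log N=P_1(N)$.

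\emph{Inductive step $i-1\to i$.} Assume $e(F)>M_i\lambda|F|$ and set $M=M_i\lambda/2$.
\begin{enumerate}
\item \emph{Extraction.} Lemma~\ref{lem:extract}, applied with the constant $M_i\lambda$, gives an induced weak expander $H\subseteq F$ with $\delta(H)>M_i\lambda/2\ge 256$ and $e(H)>M_i\lambda|H|/2$.
\item \emph{Splitting.} Lemma~\ref{lem:reservoir} gives disjoint sets $A,B$ with $e(H[A])\ge e(H)/128>M_{i-1}\lambda|A|$; the last inequality uses $|A|\le|H|$ and $M_i/256=M_{i-1}$. Every $v\in A\cup B$ has $d_H(v,B)\ge\delta(H)/8>M/16$ after a suitable normalisation of $M$.
\item \emph{Inner family.} Since $H[A]\subseteq F$ inherits~\eqref{eq:LS}, induction gives an $(i-1)$-layer family in $H[A]$ whose outermost cycle $C$ has length $q\le P_{i-1}(N)$.
\item \emph{Private trees.} Choose $h$ with $s=2^h$ polylogarithmic, large enough that~\eqref{eq:B1} and~\eqref{eq:B2} hold. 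This requires $2^{h/2}\gtrsim qR(N)$ and $s/\log^2 s\gtrsim q(R_0+h)$, so $s\approx (\log N)^{2(3i-5)+6+O(1)}$ suffices. The local sparsity~\eqref{eq:localsparse} with constant $M/1024$ must hold up to size $26\cdot4qs$. We have $M/1024=M_i\lambda/2048\ge\lambda$, and $26\cdot4qs\le\tau_j(N)$ for large $N$ because $a_j=20j+40$ is generous. Lemma~\ref{lem:trees} therefore yields $2q$ private complete binary trees of depth $h$ in $H[B]$, rooted at neighbours of the vertices of $C$.
\item \emph{Wrapping.} Lemma~\ref{lem:wrap}, applied with $n=N$, produces a cycle $C^+$ through $V(C)$ in the cyclic order of $C$, with no edge inside $H[V(C)]$. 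In particular $C^+$ is edge-disjoint from all earlier layers, since those layers lie in $H[A]$ on vertex set $V(C)$. Its length is at most $q(R_0+2h+2)\le A_{i-1}(\log N)^{3i-5}\cdot(160\cdot15^3\text{-ish})(\log N)^3$.
\end{enumerate}
Taking $C^+$ as the new outermost layer gives the required $i$-layer family.

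\emph{Main obstacle.} The delicate step is the bookkeeping that matches all the constants. First, the length bound $q(R_0+2h+2)$ must be at most $A_i(\log N)^{3i-2}$. Because $R(N)\sim160\log^3(15N)$, this needs $3200\ge 160\cdot(\text{constant from }\log(15N)\text{ vs }\log N)+o(1)$, which holds for large $N$. Second, the density constant $M_i$ must survive both the halving in Lemma~\ref{lem:extract} and the factor $128$ in Lemma~\ref{lem:reservoir}. Third, the tree scale $26m$ must stay below $\tau_j(N)$. I would fix $h=\lceil 2\log_2(q(R_0+1))\rceil+O(\log\log N)$ and check each inequality as $N\to\infty$, choosing $N_j$ large enough that the lower-order terms are absorbed.
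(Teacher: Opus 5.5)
Your proposal is correct and follows the paper's proof essentially step for step: extraction, the reservoir split using $M_i=256M_{i-1}$, induction at the same scale $N$, private trees via Lemma~\ref{lem:trees}, and wrapping via Lemma~\ref{lem:wrap}. The differences are cosmetic: you take a $q$-dependent $h$ instead of the paper's uniform $h=\lceil(12j+24)\log_2\log N\rceil$, and $M=M_i\lambda/2$ instead of $M_i\lambda$ in Lemma~\ref{lem:trees}. One slip: the factor ``$160\cdot15^3$'' in step (v) would overshoot $A_i=3200A_{i-1}$. The right estimate, which your last paragraph effectively recovers, is $R(N)+2h+2\le3200(\log N)^3$ for large $N$; for instance, $\log(15N)\le2\log N$ gives $R(N)\le1600(\log N)^3$.
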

\begin{proof}
First suppose that $j\ge2$. For $L=\log N$, take $c=12j+24$, $h=\lceil c\log_2L\rceil$, $s=2^h$, $p=\lfloor h/2\rfloor$, and $Q=A_{j-1}L^{3j-5}$. Choose $N_j$ sufficiently large that, for every $N\ge N_j$ and every $q\le Q$, we have $h\ge2$, $2h+2\le R(N)$, $104qs\le\tau_j(N)<N$, and~\eqref{eq:B1}--\eqref{eq:B2} with $R_0=R(N)$.

These requirements are compatible. Indeed, $s=\Theta_j(L^c)$, $2^p=\Theta_j(L^{c/2})$ and $R(N)=\Theta(L^3)$. The relevant exponent differences are
\begin{align*}
a_j-((3j-5)+c)&=5j+21,\\
c/2-(3j-5+3)&=3j+14,\\
c-((3j-5)+c/2)&=3j+17.
\end{align*}
They are all positive. After division by $s$, the three terms on the left of~\eqref{eq:B2} are respectively $O_j(L^{3j-5-c})$, $O_j(L^{3j-5-c/2})$, and $O_j(L^{3j-2-c})$, each of which is $o((\log L)^{-2})$. Increase $N_j$ also so that $P_i(N)\le\tau_j(N)$ for all $i\le j$. When $j=1$, only this last condition and the short-cycle estimate below are needed.

We induct on $i$, using the same $N$ at every step. For $i=1$, Lemmas~\ref{lem:prune} and~\ref{lem:shortcycle} give a cycle of length at most $2\log_2|F|+3\le6\log N=P_1(N)$.

Let $i\ge2$. By Lemma~\ref{lem:extract}, $F$ has a weak expander $H$ with $e(H)>M_i\lambda|H|/2$ and $\delta(H)>M_i\lambda/2\ge256$. Apply Lemma~\ref{lem:reservoir} to obtain $A,B$. Since $M_i=256M_{i-1}$, $e(H[A])\ge e(H)/128>M_{i-1}\lambda|A|$. Condition~\eqref{eq:LS} is inherited by $H[A]$. The induction hypothesis, at the same ambient scale $N$, gives $i-1$ layers with outermost cycle $C$ of length $q\le P_{i-1}(N)\le Q$.

Every vertex of $A\cup B$ has more than $M_i\lambda/16$ neighbours in $B$. With $m=4qs$, the bound $26m\le\tau_j(N)$ and $M_i\ge1024$ give
\[
e(F[W])\le\lambda|W|\le(M_i\lambda/1024)|W|
\qquad(0<|W|\le26m).
\]
Lemma~\ref{lem:trees}, with ambient graph $G=F$ and parameter $M=M_i\lambda$, supplies the private trees. Lemma~\ref{lem:wrap} gives the next outer cycle, whose length is at most
\[
q(R(N)+2h+2)\le3200q(\log N)^3\le P_i(N).
\]
Here $R(N)\le1600(\log N)^3$ for $N\ge15$. The new edges have an endpoint outside $V(C)$, whereas every old cycle lies in $F[V(C)]$. Thus edge-disjointness, containment and cyclic order all hold.

Finally, the density hypothesis and~\eqref{eq:LS} force $|F|>\tau_j(N)$, since otherwise~\eqref{eq:LS} could be applied to $V(F)$. The same observation applies to each recursive call, so no lower-order exception is needed.
\end{proof}

\begin{corollary}\label{cor:reservedlocal}
Fix $j\ge1$ and $r\ge32$, and let $\lambda=8r$ and $K=128M_j\lambda$. Suppose that $F\subseteq H$, $|F|\le N$, $N\ge N_j$, condition~\eqref{eq:LS} holds, and $e(F)>K|F|$. Then $F$ contains a $j$-layer family with outermost cycle of length at most $P_j(N)$, and there is a set $B\subseteq V(F)$ disjoint from its outer vertex set $X$ such that $d_H(x,B)\ge2r$ for every $x\in X$ and $\delta(H[B])\ge r$.
\end{corollary}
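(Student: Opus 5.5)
Following the proofs of Lemmas~\ref{lem:localshort} and~\ref{lem:trees}, the plan is to first split off a reservoir and then build the family inside the remaining dense part. Apply Lemma~\ref{lem:extract} to $F$ with $M=K$. This gives an induced weak expander $F'\subseteq F$ with $e(F')>K|F'|/2$ and $\delta(F')>K/2\ge256$. Then apply Lemma~\ref{lem:reservoir} to $F'$ to obtain disjoint sets $A,B_0$. These satisfy $e(F'[A])\ge e(F')/128>M_j\lambda|A|$ and $d_{F'}(v,B_0)\ge d_{F'}(v)/8>K/16$ for every $v\in A\cup B_0$.

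Condition~\eqref{eq:LS} passes to the subgraph $F'[A]$. Since $|F'[A]|\le N$ and $N\ge N_j$, Lemma~\ref{lem:localshort} with $i=j$ yields a $j$-layer family inside $F'[A]$ whose outermost cycle has length at most $P_j(N)$. Its outer vertex set $X\subseteq A$ is disjoint from $B_0$.

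It remains to make $B$ satisfy the degree conditions. Every $x\in X$ has more than $K/16=8M_j\lambda\ge 2r$ neighbours in $B_0$. Every $v\in B_0$ has more than $K/16\ge r$ neighbours in $B_0$, all inside $F'\subseteq H$. Taking $B=B_0$ therefore gives $d_H(x,B)\ge d_{F'}(x,B)\ge2r$ and $\delta(H[B])\ge\delta(F'[B])\ge r$. Here we use that degrees in the subgraph $F'$ are at most degrees in $H$, and that $H[B]\supseteq F'[B]$.

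The argument has no real obstacle. The one point to check is the constant bookkeeping, namely that $K/16\ge 2r$; this holds since $K=128M_j\cdot8r$. The factor $128$ in $K$ exactly compensates for the halving in Lemma~\ref{lem:extract} combined with the factor $1/128$ in Lemma~\ref{lem:reservoir}, up to the factor $2$ needed for strict inequality. If Lemma~\ref{lem:localshort} needs $\lambda\ge1$, this holds because $r\ge32$.
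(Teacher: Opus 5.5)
Your plan is the right one: sparsify, split off a reservoir with Lemma~\ref{lem:reservoir}, apply Lemma~\ref{lem:localshort} to the core $A$, and keep $B_0$ as the reservoir. However, the density step fails as written.

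\textbf{The gap.} Lemma~\ref{lem:extract} with $M=K$ only gives $e(F')>K|F'|/2=64M_j\lambda|F'|$. Lemma~\ref{lem:reservoir} then yields only
\[
e(F'[A])\ge e(F')/128>\tfrac12 M_j\lambda|F'|.
\]
This exceeds $M_j\lambda|A|$ only if $|A|\le|F'|/2$. Nothing in the statement of Lemma~\ref{lem:reservoir} bounds $|A|$ from above; its conclusions are compatible with $|A|>|F'|/2$ and a small $B$. So the inequality $e(F'[A])>M_j\lambda|A|$ is unjustified.

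Your closing bookkeeping remark is off by exactly this factor. The halving and the factor $1/128$ together cost $256$, while $K/(M_j\lambda)=128$. Lemma~\ref{lem:localshort} cannot absorb the loss either. Its density requirement $e>M_j\lambda|\cdot|$ uses the same $\lambda$ as in~\eqref{eq:LS}, and you cannot lower $\lambda$ there. Rerunning its proof with half the density runs short by the same factor at every level.

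\textbf{The repair.} Drop the expander step, since you never use the weak-expander property. Apply Lemma~\ref{lem:prune} at threshold $K$ to get a nonempty $J\subseteq F$ with $\delta(J)\ge K\ge256$ and $e(J)>K|J|$. Lemma~\ref{lem:reservoir} then gives
\[
e(J[A])\ge e(J)/128>M_j\lambda|J|\ge M_j\lambda|A|.
\]
The rest of your argument goes through unchanged. The reservoir degrees are now at least $K/4\ge2r$ from every root and at least $K/8\ge r$ inside $B$, and they persist in $H$ because $J\subseteq H$. This corrected version is exactly the paper's proof.
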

\begin{proof}
By Lemma~\ref{lem:prune}, there is a nonempty $J\subseteq F$ with $\delta(J)\ge K$ and $e(J)>K|J|$. Lemma~\ref{lem:reservoir} gives disjoint $A,B$ such that $e(J[A])\ge e(J)/128>M_j\lambda|A|$. Apply Lemma~\ref{lem:localshort} to $J[A]$ at scale $N$. The same reservoir $B$ has degree at least $K/4\ge2r$ from every chosen root and minimum degree at least $K/8\ge r$. These bounds also hold in $H$.
\end{proof}

\section{Choosing the inner cycles}\label{sec:selection}

The local construction requires an edge bound on small vertex sets. In this section we obtain two such bounds by choosing the inner cycles together with a reservoir. The first is enough to make the chosen family short. The second rules out small cuts that could prevent a vertex of the family from having two external paths.

\subsection{Keeping an external reservoir}

The next lemma is a version of Lemma~\ref{lem:reservoir} in which a prescribed external subgraph is retained. Its edge bound is in terms of $e(H[Q])$, even if vertices of $Q$ have many neighbours outside $Q$.

\begin{lemma}\label{lem:anchored}
Let $r>0$, let $V(H)=Q\dcup Z$, and suppose that $\delta(H)\ge D\ge\max\{256,8r\}$ and $\delta(H[Z])\ge r$. Then there are disjoint $A,B\subseteq Q$ such that $e(H[A])\ge e(H[Q])/128$ and
\[
d_H(x,Z\cup B)\ge d_H(x)/4\quad(x\in A),\qquad
\delta(H[Z\cup B])\ge r.
\]
\end{lemma}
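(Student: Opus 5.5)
We adapt the random construction of Lemma~\ref{lem:reservoir}, now performed only inside $Q$, with $Z$ kept whole as a permanent part of the reservoir. Put each vertex of $Q$ into $A_0$ independently with probability $1/8$, and let $B_0=Q\setminus A_0$. Measure vertices by $\mu(U)=\sum_{v\in U}d_H(v)$, with degrees taken in all of $H$. Set
\[
B^{\times}=\{v\in Q:d_H(v,A_0)>d_H(v)/4\}.
\]
The Chernoff--Markov estimate from Lemma~\ref{lem:reservoir} applies to every $v\in Q$, since $d_H(v,A_0)$ is binomial with $d_H(v)$ trials and probability at most $1/8$; neighbours in $Z$ never enter $A_0$, which only helps. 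It gives $\mathbb E\mu(B^{\times})\le\mu(Q)e^{-D/24}$.

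Next we prune. Starting from $Z\cup B_0$, we repeatedly delete a vertex $v\in B_0$ whose degree into the remaining set is below $d_H(v)/8$. Vertices of $Z$ are never deleted. Let $L$ be the deleted set and $B=B_0\setminus L$. Every surviving vertex $v\in B$ then has $d_H(v,Z\cup B)\ge d_H(v)/8\ge D/8\ge r$. Every $z\in Z$ has $d_H(z,Z\cup B)\ge d_H(z,Z)\ge r$. Together these give $\delta(H[Z\cup B])\ge r$, which is exactly why $D\ge8r$ is assumed.

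The accounting for $L$ follows the earlier proof. A deleted vertex outside $B^{\times}$ starts with at least $3d_H(v)/4$ neighbours in $Z\cup B_0$. At deletion it has fewer than $d_H(v)/8$, so more than $5d_H(v)/8$ of those neighbours were deleted earlier, and all of them lie in $L$. Counting edges of $H[L]$ at their later endpoint gives $\mu(L)\le5\mu(B^{\times})$. Define $A_{\rm bad}=\{v\in A_0:d_H(v,Z\cup B)<d_H(v)/4\}$ and $A=A_0\setminus A_{\rm bad}$. As before, $\mu(A_{\rm bad})\le\mu(B^{\times})+2\mu(L)\le11\mu(B^{\times})$. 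This yields
\[
e(H[A])\ge e(H[A_0])-11\mu(B^{\times}),
\qquad
\mathbb E\,e(H[A_0])=e(H[Q])/64.
\]

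The main obstacle is that the loss term is controlled by $\mu(Q)$, which counts all degrees in $H$, while the gain is only $e(H[Q])/64$. The statement stresses that vertices of $Q$ may send many edges to $Z$, so $\mu(Q)$ can be much larger than $2e(H[Q])$, and the naive bound fails. To fix this we weight more carefully. A vertex $v$ with $d_H(v,Q)$ small contributes little to $e(H[A_0])$, and $A_{\rm bad}$ removes only edges of $H[Q]$ at $v$. So we replace $\mu(A_{\rm bad})$ by the number of $H[Q]$-edges lost, $\sum_{v\in A_{\rm bad}}d_H(v,Q)$.

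For this we sharpen the definition to
\[
B^{\times}=\{v:d_H(v,A_0)>d_H(v)/4\},
\]
which can only occur when $d_H(v,Q)>d_H(v)/4$. For such $v$ the tail bound is $e^{-d_H(v)/24}$ in terms of the full degree, so $\mathbb E\sum_{v\in B^{\times}}d_H(v)\le\sum_{v\in Q}4d_H(v,Q)e^{-D/24}=8e(H[Q])e^{-D/24}$. The contributions to $L$ and to $A_{\rm bad}$ are then dominated, as above, by $\mu(B^{\times})$ up to constant factors. Tracing these constants, the total loss is at most about $22\cdot4\,e(H[Q])e^{-D/24}$. For $D\ge256$ this is below $e(H[Q])/128$, and we take an outcome achieving the expectation.
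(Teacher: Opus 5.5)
Your proof is correct and is essentially the paper's own argument. You use the same random split of $Q$, the same pruning of $B_0$ with $Z$ kept intact, and the same bounds $\mu(L)\le5\mu(B^{\times})$ and $\mu(A_{\rm bad})\le11\mu(B^{\times})$. The obstacle you identified is resolved exactly as in the paper: $v\in B^{\times}$ forces $4d_{H[Q]}(v)>d_H(v)$, so $\mathbb E\mu(B^{\times})\le8e(H[Q])e^{-D/24}$ and the total loss is at most $88e(H[Q])e^{-D/24}$. Your ``sharpened'' $B^{\times}$ is the same set as before, and the detour through $\sum_{v\in A_{\rm bad}}d_H(v,Q)$ is not needed.
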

\begin{proof}
Include each vertex of $Q$ independently in $A_0$ with probability $1/8$. Write $\mu(U)=\sum_{u\in U}d_H(u)$, with all degrees measured in $H$, and let $B^{\times}=\{v\in Q:d_H(v,A_0)>d_H(v)/4\}$. The random variable $d_H(v,A_0)$ is dominated by $\operatorname{Bin}(d_H(v),1/8)$, so the estimate in Lemma~\ref{lem:reservoir} gives \mbox{$\Pr(v\in B^{\times})\le e^{-d_H(v)/24}$}. Also this event is impossible unless $4d_{H[Q]}(v)>d_H(v)$. Consequently,
\[
\begin{aligned}
\mathbb E\mu(B^{\times})
&\le e^{-D/24}\sum_{\substack{v\in Q\\4d_{H[Q]}(v)>d_H(v)}}d_H(v)\\
&\le4e^{-D/24}\sum_{v\in Q}d_{H[Q]}(v)
=8e(H[Q])e^{-D/24}.
\end{aligned}
\]

Let $B_0=Q\setminus A_0$. Repeatedly delete a vertex of $B_0$ whose current degree into $Z$ together with the remaining part of $B_0$ is less than $d_H(v)/8$. No vertex of $Z$ is deleted. Let $L$ be the deleted set and $B=B_0\setminus L$.

A deleted vertex outside $B^{\times}$ initially has at least $3d_H(v)/4$ neighbours in $Z\cup B_0$. At deletion, more than $5d_H(v)/8$ of its neighbours must have been deleted earlier. Counting by deletion order as before gives
\[
\frac58\mu(L\setminus B^{\times})\le e(H[L])\le\frac12\mu(L),
\qquad \mu(L)\le5\mu(B^{\times}).
\]
Remove also from $A_0$ the vertices with fewer than $d_H(v)/4$ neighbours in $Z\cup B$, and denote the remaining set by $A$. A removed vertex outside $B^{\times}$ has more than $d_H(v)/2$ neighbours in $L$. The total degree of vertices removed from $A_0$ is therefore at most $\mu(B^{\times})+2\mu(L)\le11\mu(B^{\times})$. Together with the bound on $\mathbb E\mu(B^{\times})$, this yields
\[
\mathbb E e(H[A])\ge e(H[Q])\left(\frac1{64}-88e^{-D/24}\right)
\ge e(H[Q])/128,
\]
where the last inequality holds for $D\ge256$. Choose an outcome attaining this bound.

The required degrees from $A$ follow from its definition. Every vertex of $B$ has at least $d_H(v)/8\ge r$ neighbours in $Z\cup B$, and every vertex of $Z$ retains all its original neighbours in $Z$. Thus \mbox{$\delta(H[Z\cup B])\ge r$}.
\end{proof}

\subsection{A minimum family with a reservoir}

Fix an integer $j\ge1$. Throughout this section, assume that there is a constant $\kappa_j>0$ such that
\begin{equation}\label{eq:indhyp}
e(F)>\kappa_j|F|
\quad\Longrightarrow\quad
F\text{ contains }j\text{ nested cycles without geometric crossings}.
\end{equation}
This is the induction hypothesis used in the proof of Theorem~\ref{thm:main}. Choose
\begin{equation}\label{eq:rK}
r\in\mathbb N,\qquad
r\ge1024\max\{1,\kappa_j\},\qquad
\lambda=8r,\qquad K=128M_j\lambda.
\end{equation}
Here $\kappa_j$ is the preceding forcing density, $r$ the reservoir degree threshold, and $\lambda,K$ the density thresholds used below.
In a fixed host $H$, call a $j$-layer family \emph{eligible} if its outer vertex set $X$ admits a disjoint set $B$ with $d_H(x,B)\ge2r$ for every $x\in X$ and $\delta(H[B])\ge r$. We call the vertices of $X$ the \emph{roots} and $B$ a \emph{reservoir} for the family. The reservoir may change when the family is replaced, but the thresholds $2r,r$ remain fixed.

\begin{lemma}\label{lem:denseeligible}
Every nonempty $S\subseteq V(H)$ with $e(H[S])>\lambda|S|$ contains an eligible $j$-layer family whose reservoir also lies in $S$.
\end{lemma}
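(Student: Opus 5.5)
The plan is to repeat the proof of Corollary~\ref{cor:reservedlocal}, with the induction hypothesis~\eqref{eq:indhyp} in place of the local edge bound. Let $F=H[S]$, so $e(F)>\lambda|F|=8r|F|$. By Lemma~\ref{lem:prune} with $a=8r$, there is a nonempty subgraph $J\subseteq F$ with $\delta(J)\ge 8r\ge 8192\ge256$ and $e(J)>8r|J|$; note $V(J)\subseteq S$. Now apply Lemma~\ref{lem:reservoir} to $J$. It gives disjoint $A,B\subseteq V(J)\subseteq S$ with
\[
e(J[A])\ge e(J)/128>\tfrac{r}{16}|J|\ge\tfrac{r}{16}|A|,
\]
with $d_J(v,B)\ge d_J(v)/4\ge2r$ for $v\in A$, and with $d_J(v,B)\ge d_J(v)/8\ge r$ for $v\in B$.

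Since $r\ge1024\kappa_j$, we have $r/16\ge64\kappa_j\ge\kappa_j$, so $e(J[A])>\kappa_j|A|$. Hypothesis~\eqref{eq:indhyp} then gives a $j$-layer family inside $J[A]$. Its outer vertex set $X$ lies in $A$, which is disjoint from $B$. Every $x\in X$ has $d_H(x,B)\ge d_J(x,B)\ge2r$. Also $\delta(H[B])\ge\delta(J[B])\ge r$, because $J\subseteq H$ and adding edges only increases degrees. So $B$ is a reservoir lying in $S$, and the family is eligible.

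The only point needing care is the constant check. We need $\delta(J)\ge256$ to invoke Lemma~\ref{lem:reservoir}, and we need the factor $128$ loss to stay above $\kappa_j$. Both hold because $r\ge1024\max\{1,\kappa_j\}$. No local sparsity or length bound is needed here, so $K$ plays no role in this lemma.
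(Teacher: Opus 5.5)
Your proof is correct and matches the paper's argument: you prune $H[S]$ at threshold $8r$ via Lemma~\ref{lem:prune}, split the resulting $J$ with Lemma~\ref{lem:reservoir}, and apply~\eqref{eq:indhyp} to $J[A]$ using $r/16>\kappa_j$, while the bounds $d_J(v)/4\ge2r$ and $d_J(v)/8\ge r$ give eligibility. Your remarks that these degree bounds carry over from $J$ to $H$ and that $A,B\subseteq V(J)\subseteq S$ spell out what the paper leaves implicit.
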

\begin{proof}
Apply Lemma~\ref{lem:prune} at threshold $8r$, obtaining a nonempty $J\subseteq H[S]$ with $\delta(J)\ge8r$ and $e(J)>8r|J|$. By Lemma~\ref{lem:reservoir}, there are disjoint $A,B$ such that
\[
e(J[A])\ge e(J)/128>\frac r{16}|J|
\ge\frac r{16}|A|>\kappa_j|A|.
\]
Use~\eqref{eq:indhyp} in $J[A]$. The degree bounds into $B$ are at least $2r$ at every root and at least $r$ within $B$, so the resulting family is eligible.
\end{proof}

Suppose now that $\delta(H)\ge D\ge128r$, with $D$ as fixed in~\eqref{eq:D} below. Eligible families exist. Indeed, Lemma~\ref{lem:reservoir} applied directly to $H$ gives a core $A$ satisfying $e(H[A])\ge D|H|/256>\kappa_j|A|$, and a reservoir with the required degree bounds. Choose an eligible family whose outermost cycle has as few vertices as possible. Let $X$ be its outer vertex set and $q=|X|$. The minimum is over all eligible families and their possible reservoirs in $H$. Lemma~\ref{lem:denseeligible} gives
\begin{equation}\label{eq:MIN}
e(H[S])\le\lambda|S|\qquad(0<|S|<q).
\end{equation}

\Needspace{5\baselineskip}
\begin{lemma}\label{lem:peelX}
Starting from $H-X$, repeatedly delete a vertex of current degree less than $r$. Let $E\subseteq V(H)\setminus X$ be the set of additionally deleted vertices and let $R=V(H)\setminus(X\cup E)$ be the final remaining set. Thus $V(H)=X\dcup E\dcup R$, and
\begin{equation}\label{eq:CORE}
|E|<q,\qquad
d_H(x,R)\ge2r\quad(x\in X),\qquad
\delta(H[R])\ge r.
\end{equation}
\end{lemma}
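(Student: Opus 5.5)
The natural tool is the fixed eligible reservoir $B_X$ of the chosen minimum family. It is disjoint from $X$ and satisfies $d_H(x,B_X)\ge2r$ for $x\in X$ and $\delta(H[B_X])\ge r$. The plan is to show that the peeling process never touches $B_X$, and then to bound $|E|$ using the local edge bound~\eqref{eq:MIN}.

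\emph{Step 1: $B_X$ survives the peeling.} I will prove by induction on the deletion steps that no vertex of $B_X$ is ever deleted. While all of $B_X$ remains, each $b\in B_X$ has current degree at least $d_H(b,B_X)\ge r$, since $B_X\cap X=\varnothing$. So $b$ is never eligible for deletion. Hence $B_X\subseteq R$. Two of the claims in~\eqref{eq:CORE} follow at once.
\begin{itemize}
\item For $x\in X$, $d_H(x,R)\ge d_H(x,B_X)\ge2r$.
\item $\delta(H[R])\ge r$ holds by the stopping rule of the peeling.
\end{itemize}
In particular $R\ne\varnothing$, and $X\dcup E\dcup R$ is a partition by construction.

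\emph{Step 2: $|E|<q$.} This is the real content and the main obstacle. Suppose $|E|\ge q$, and let $E'$ consist of the first $q$ deleted vertices; the argument is more naturally run on $S=E'$ or on $S=E'\cup X$. Counting each edge at its earlier-deleted endpoint bounds the edges inside $E'$ together with edges to later vertices. That gives an upper bound on density, which is the wrong direction, so a direct density contradiction from~\eqref{eq:MIN} is not available this way.

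Instead I will use minimality of $q$ through eligibility. The first alternative is to show that deleted vertices have many neighbours in $X$. Every $v\in V(H)\setminus X$ has $d_H(v)\ge D\ge128r$, but at deletion it has fewer than $r$ neighbours in the remaining graph. So $v$ sends more than $D-r$ edges to $X$ together with earlier-deleted vertices. Take $S=X\cup E'$ with $|S|=2q$ and count edges of $H[S]$. Each $v\in E'$ contributes more than $D-r$ edges into $X\cup\{\text{earlier }E'\}$, so $e(H[S])>q(D-r)$. This is far more than $\lambda\cdot 2q$ when $D\ge128r$ and $\lambda=8r$. Lemma~\ref{lem:denseeligible} then yields an eligible family inside $S$. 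However, $|S|=2q$ does not violate~\eqref{eq:MIN} directly, because that bound needs $|S|<q$.

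\emph{Repairing the size issue.} I would instead take $E'$ to be the first $\lceil q/2\rceil$ deleted vertices, together with a suitable subset of $X$. The cleanest route is to pass to $S=E'$ alone and use the induction hypothesis~\eqref{eq:indhyp} with the larger density. It is better still to exploit that a family found in a set of size less than $q$ contradicts minimality. So I aim for a set $S$ with $|S|<q$ and $e(H[S])>\lambda|S|$.

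Pick $E'$ as the first $t=\lfloor q/(2\cdot 16)\rfloor$ deleted vertices. Each of them has more than $D-r$ edges into $X\cup E'$, and at most $t^2$ edges lie inside $E'$. So the edges to $X$ number more than $t(D-r)-t^2$. Averaging over subsets of $X$ of size $q-t-1$ keeps a $(q-t-1)/q$ fraction of these edges in expectation. That gives a set $S$ of size less than $q$ with roughly $t(D-r)/2$ edges. This beats $\lambda|S|\approx 8rq$ provided $tD\gg rq$, i.e.\ $D\gg 512 r$. This is where the choice of $D$ in~\eqref{eq:D} must be large enough relative to $r$, and I expect checking that compatibility to be the delicate step. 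The contradiction with~\eqref{eq:MIN} then gives $|E|<t\le q$.
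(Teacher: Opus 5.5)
Your Step~1 is the paper's argument and is correct. The plan for $|E|<q$ is also the right one: find a set of fewer than $q$ vertices with more than $\lambda$ times as many edges, contradicting \eqref{eq:MIN}. The execution, however, fails.

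You bound $e_H(E',X)>t(D-r)-t^2$ and then treat this as roughly $t(D-r)/2$. But $D$ is a constant, and nothing bounds $q$, hence $t=\lfloor q/32\rfloor$, in terms of $D$. Even later one only knows $q\le\tau_j(|H|)$, which grows with $|H|$. Once $t>D-r$ your lower bound is negative and the argument gives nothing.

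The subtraction is also unnecessary. Since $E'\subseteq S=E'\cup X'$, the edges inside $E'$ count towards $e(H[S])$. Counting each edge at its later-deleted endpoint gives $e(H[E'])+e_H(E',X)>t(D-r)$, so
\[
\mathbb E\,e(H[S])\ge e(H[E'])+\frac{q-t-1}{q}\,e_H(E',X)>\frac{q-t-1}{q}\,t(D-r).
\]
For $q\ge32$ this exceeds $\lambda(q-1)=8r(q-1)$ as soon as, say, $D\ge548r$. That is guaranteed by \eqref{eq:D}, since $D\ge128K$. So the compatibility check you deferred is immediate once the count is repaired.

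You also omit the case $t=0$, i.e.\ $q<32$, where $E'$ is empty. There one argues directly that $\delta(H-X)\ge D-q>r$, so no vertex is deleted at all.

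For comparison, the paper keeps your first attempt and simply subsamples it. Let $P$ be the first $q$ deleted vertices. Then $e(H[X\cup P])>(D-r)q$ on $2q$ vertices. A uniformly random $(q-1)$-subset of $X\cup P$ has expected edge density greater than $\frac{D-r}{2}\cdot\frac{q-2}{2q-1}\ge\frac{D-r}{8}>8r$ for $q\ge4$. This contradicts \eqref{eq:MIN} using only $D\ge128r$. The case $q\le3$ is handled by the same minimum-degree remark. So the size obstruction you identified for $S=X\cup E'$ goes away by averaging over all of $X\cup P$, not just over $X$, and then no quadratic error term appears.
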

\begin{proof}
A reservoir witnessing the eligibility of $X$ survives throughout the process. Otherwise its first deleted vertex would still have at least $r$ neighbours in that reservoir. The two degree conclusions follow.

Suppose that $|E|\ge q$. If $q\le3$, then $\delta(H-X)\ge D-q>r$, so no first deletion is possible. We may therefore assume $q\ge4$. Let $P$ consist of the first $q$ deleted vertices. At deletion, each has more than $D-r$ neighbours in $X$ or among earlier deleted vertices. Each edge inside $P$ is counted only at its later endpoint, and each edge between $P$ and $X$ is counted once. Hence $e(H[X\cup P])>(D-r)q$ and $|X\cup P|=2q$. A uniformly chosen $(q-1)$-element subset $S$ of $X\cup P$ has expected edge density
\[
\frac{\mathbb E e(H[S])}{q-1}
=\frac{e(H[X\cup P])(q-2)}{(2q)(2q-1)}
>\frac{D-r}{2}\frac{q-2}{2q-1}
\ge\frac{D-r}{8}>8r.
\]
Here $(q-2)/(2q-1)\ge1/4$ for $q\ge4$. Some such $S$ contradicts~\eqref{eq:MIN}.
\end{proof}

\begin{lemma}\label{lem:shorteligible}
If $D>2K$ and $n=|H|\ge N_j$, then $q\le\tau_j(n)$.
\end{lemma}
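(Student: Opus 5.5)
We argue by contradiction: assume $q>\tau_j(n)$, and use Corollary~\ref{cor:reservedlocal} to produce an eligible family whose outermost cycle has at most $P_j(n)\le\tau_j(n)<q$ vertices. That contradicts the minimality of $q$. The scale will be $N=n$. We need a subgraph $F\subseteq H$ with $|F|\le n$ and $e(F)>K|F|$ that satisfies the local bound~\eqref{eq:LS} at scale $n$ with $\lambda=8r$. The corollary then returns a $j$-layer family with outer vertex set $X'$, $|X'|\le P_j(n)$, together with a set $B\subseteq V(F)$ disjoint from $X'$. Every root has at least $2r$ neighbours in $B$, and $\delta(H[B])\ge r$. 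So this family is eligible in $H$, and its outer cycle is shorter than $q$, which is the contradiction.

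To verify~\eqref{eq:LS}, we use~\eqref{eq:MIN}. It gives $e(H[S])\le\lambda|S|$ for all $0<|S|<q$. If $q>\tau_j(n)$, then every $W$ with $0<|W|\le\tau_j(n)$ has $|W|<q$. Hence $e(F[W])\le e(H[W])\le\lambda|W|$ for every subgraph $F\subseteq H$, so~\eqref{eq:LS} holds for any such $F$ at scale $n$. The remaining requirements of Corollary~\ref{cor:reservedlocal} are routine: $r\ge32$ holds by~\eqref{eq:rK}, and $n\ge N_j$ holds by hypothesis.

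For the density, take $F=H$ itself. Since $\delta(H)\ge D>2K$, we have $e(H)\ge D|H|/2>K|H|$, and $|H|=n$. So the corollary applies directly to $F=H$.

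The main point to check is the direction of the inequality $P_j(n)\le\tau_j(n)<q$. Lemma~\ref{lem:localshort} guarantees $P_j(N)\le\tau_j(N)$ for $N\ge N_j$. The outer cycle produced therefore has at most $\tau_j(n)$ vertices, which is strictly fewer than $q$ under our assumption. This is exactly what contradicts the choice of the eligible family with minimum $q$. No further obstacle is expected: the whole argument is a bookkeeping combination of~\eqref{eq:MIN}, the minimum degree bound $D>2K$, and Corollary~\ref{cor:reservedlocal}.
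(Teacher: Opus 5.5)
Your proposal is correct and matches the paper's proof: assume $q>\tau_j(n)$, use \eqref{eq:MIN} to get \eqref{eq:LS} on all of $H$ at scale $N=n$, note $e(H)\ge Dn/2>Kn$, and apply Corollary~\ref{cor:reservedlocal} to $F=H$. This yields an eligible family with outer cycle of length at most $P_j(n)\le\tau_j(n)<q$, contradicting minimality.
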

\begin{proof}
If $q>\tau_j(n)$, inequality~\eqref{eq:MIN} gives the local condition~\eqref{eq:LS} throughout $H$ at scale $N=n$. Since $e(H)\ge Dn/2>Kn$, Corollary~\ref{cor:reservedlocal} gives an eligible family with outermost cycle of length at most $P_j(n)\le\tau_j(n)<q$, a contradiction.
\end{proof}

The next bound has no factor depending on $r$. This stronger estimate is needed for the smaller cuts in the path argument.

\begin{lemma}\label{lem:smallsets}
Every nonempty $S\subseteq V(H)$ with $2|S|<q$ satisfies
\begin{equation}\label{eq:smallMIN}
e(H[S])\le256\kappa_j|S|.
\end{equation}
\end{lemma}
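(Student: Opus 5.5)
Argue by contradiction. Suppose $S$ is nonempty, $2|S|<q$, and $e(H[S])>256\kappa_j|S|$. The plan is to build a new eligible $j$-layer family whose outer vertex set lies in $S$ and so has fewer than $q$ vertices, contradicting the minimality of $q$. The reservoir must come from outside. The natural external subgraph is $H[R]$ from Lemma~\ref{lem:peelX}, which has minimum degree at least $r$. The difficulty is that $S$ may meet $R$, $X$ and $E$.

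The plan is to apply Lemma~\ref{lem:anchored} with $Q=S$ and $Z=V(H)\setminus S$ in the graph $H$. This needs $\delta(H[Z])\ge r$, which fails in general. So first shrink the outside. Starting from $H-S$, repeatedly delete vertices of current degree less than $r$, in the manner of Lemma~\ref{lem:peelX}, and let $E'$ be the deleted set and $Z'$ the remainder.

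I would argue that $|E'|<|S|$, or more generally that $|E'|$ is small. The reservoir $R$ of the current minimum family nearly survives, but the vertices it loses to $S$ can cause cascades. So instead I would bound the deletion directly, as in Lemma~\ref{lem:peelX}. The first $|S|$ deleted vertices each send more than $D-r$ edges back into $S$ or to earlier deletions. This gives a set of $2|S|<q$ vertices with edge density at least $(D-r)/2>\lambda$, which contradicts~\eqref{eq:MIN}. Hence $|E'|<|S|$.

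Next, work in $H'=H[S\cup E'\cup Z']$, that is, in $H$ itself, with $Q=S\cup E'$ and $Z=Z'$, so that $\delta(H[Z])\ge r$. Lemma~\ref{lem:anchored} gives disjoint $A,B\subseteq Q$ with $e(H[A])\ge e(H[Q])/128\ge e(H[S])/128>2\kappa_j|S|$. Every vertex of $A$ has at least $d_H(x)/4\ge D/4\ge2r$ neighbours in $Z\cup B$, and $\delta(H[Z\cup B])\ge r$.

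Since $|A|\le|Q|<2|S|$, we get $e(H[A])>\kappa_j|A|$. Hypothesis~\eqref{eq:indhyp} then gives a $j$-layer family in $H[A]$. Its outer vertex set lies in $A$, has size at most $|A|<2|S|<q$, and has reservoir $Z\cup B$, which is disjoint from $A$. This contradicts the choice of $q$. The constant $256=2\cdot128$ matches exactly the loss from the factor $1/128$ and the bound $|Q|<2|S|$.

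The main obstacle is the cascade-control step bounding $|E'|$. I also need to check that $\delta(H)\ge D\ge\max\{256,8r\}$ holds, and that $2|S|<q$ is what makes the density contradiction with~\eqref{eq:MIN} work. If the first $|S|$ deletions give only $2|S|$ vertices rather than fewer than $q$, the strict inequality $2|S|<q$ is exactly what is required, so the argument should close.
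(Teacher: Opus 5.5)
Your proposal is correct and follows the paper's proof essentially step for step. You peel $H-S$ at threshold $r$, use~\eqref{eq:MIN} on $S$ together with the first $|S|$ deleted vertices to show fewer than $|S|$ deletions, apply Lemma~\ref{lem:anchored} with $Q=S\cup E'$, and invoke~\eqref{eq:indhyp} in $H[A]$ to obtain an eligible family with fewer than $q$ outer vertices. The one detail to add is that $Z'$ is nonempty, since $|V(H)\setminus S|\ge q-|S|>|S|>|E'|$.
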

\begin{proof}
Let $s=|S|$, and start a new deletion process from the whole graph $H-S$, again at degree threshold $r$. If $s$ additional vertices were deleted, the first $s$ of them would form a set $P$ with $e(H[S\cup P])>(D-r)s>2\lambda s$. Since $|S\cup P|=2s<q$, this contradicts~\eqref{eq:MIN}. Thus the deleted set $L$ has size less than $s$. Let $Q=S\cup L$ and $Z=V(H)\setminus Q$. Then $|Q|<2s<q$ and $\delta(H[Z])\ge r$. In particular, $Z$ is nonempty because $|Q|<q\le|H|$.

Suppose that~\eqref{eq:smallMIN} fails. Lemma~\ref{lem:anchored} gives $A,B\subseteq Q$ with $A\cap B=\varnothing$ and
\[
e(H[A])\ge e(H[Q])/128
\ge e(H[S])/128>2\kappa_js>\kappa_j|A|.
\]
By~\eqref{eq:indhyp}, $H[A]$ contains a $j$-layer family. Its vertices have at least $D/4\ge2r$ neighbours in $Z\cup B$, and $\delta(H[Z\cup B])\ge r$. It is therefore eligible. Its outermost cycle, however, has at most $|A|\le|Q|<q$ vertices, contradicting the definition of $q$.
\end{proof}

The last replacement may change every member of the old family. It is valid because the minimum was taken over all eligible $j$-layer families in $H$, rather than over extensions of a prescribed family.

\subsection{Two paths from every vertex}

For real $s\ge1$, define
\[
F_\eps(s)=\frac{64s}{\eps}\log^2\!\left(\frac{15s}{\eps}\right),
\qquad
\mathcal N(q)=\lceil F_\eps(4q)\rceil+4q\quad(q\in\mathbb N).
\]
The scalar functions $F_\eps(s)$ and $\mathcal N(q)$ bound set and subgraph orders, not neighbourhoods. For $U\subseteq V(H)$ and real $s\ge1$, a weak expander $H$ satisfies
\begin{equation}\label{eq:INV}
|U|\le|H|/2,\quad |N_H(U)|\le s
\quad\Longrightarrow\quad |U|<F_\eps(s).
\end{equation}
To see this for nonempty $U$, note that $x/\log^2(15x)$ is increasing for $x\ge1$. Let $z=s/\eps$, $\ell=\log(15z)$ and $x_0=64z\ell^2$. Since $\ell\ge\log15$, $\log(15x_0)=\ell+\log64+2\log\ell\le4\ell$. Indeed, $3\ell-\log64-2\log\ell$ is positive at $\ell=\log15$ and increasing thereafter. Thus $\frac{x_0}{\log^2(15x_0)}\ge4z>z$, whereas expansion gives $|U|/\log^2(15|U|)\le z$. Monotonicity proves~\eqref{eq:INV}; the empty-set case is immediate.

Choose an integer $q_0\ge3$ so that for every integer $q>q_0$,
\begin{equation}\label{eq:q0}
\mathcal N(q)\ge N_j,\qquad
\tau_j(\mathcal N(q))<q,\qquad
P_j(\mathcal N(q))<q.
\end{equation}
Such a choice is possible because $\mathcal N(q)=O(q\log^2q)$ and each fixed power of $\log q$ is $o(q)$. We now fix an integer
\begin{equation}\label{eq:D}
D\ge\max\{128r,128K,\mathcal N(q_0)+2\}.
\end{equation}
The constants are chosen in the order $r,\lambda,K,N_j,q_0,D$. None of them depends on the order of $H$.

\begin{lemma}\label{lem:fullfan}
Assume~\eqref{eq:indhyp} and the choices above. Let $H$ be a weak expander on $n$ vertices with $\delta(H)\ge D$, and let $X,E,R$ come from a minimum eligible family and Lemma~\ref{lem:peelX}; write $q=|X|$. Every set $T\subseteq R$ with
\begin{equation}\label{eq:target}
|T|>F_\eps(4q)+2q
\end{equation}
admits two $X$--$T$ paths from each vertex of $X$, such that all $2q$ paths lie in $H[X\cup R]$, are disjoint outside $X$, have distinct endpoints in $T$, and have no other vertex of $X$ in their interiors.
\end{lemma}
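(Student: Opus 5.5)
This is a Menger-type statement, and I will prove it by a max-flow/min-cut argument. Build an auxiliary network from $H[X\cup R]$. Replace each $x\in X$ by two clones $x',x''$, each adjacent to all neighbours of $x$ in $R$; edges inside $X$ are discarded, since paths may not pass through $X$. Add a source joined to all $2q$ clones and a sink joined to all of $T$. Give unit capacity to every vertex of $R$. If $2q$ vertex-disjoint source--sink paths exist, then removing the source and sink, shortening at the first vertex of $T$, and identifying clones gives the required $2q$ paths. These paths lie in $H[X\cup R]$, are disjoint outside $X$, have distinct endpoints in $T$, and have no other vertex of $X$ in their interiors.

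By Menger's theorem, it therefore suffices to exclude a set $W$ of fewer than $2q$ vertices separating the clones from $T$. Cutting a clone is never better than cutting the corresponding root, so I may take $W=W_X\dcup W_R$ with $W_X\subseteq X$ and $W_R\subseteq R$, and I assume $|W_X|+2|W_R|$-type weighted bounds hold appropriately. Let $X_0=X\setminus W_X$ be the uncut roots, so $X_0\neq\varnothing$, and let $U$ be the set of vertices of $R\setminus W_R$ reachable from $X_0$ in $H[X_0\cup R]-W_R$. Then $U\cap T=\varnothing$, and in $H[R]$ the set $U$ has external neighbourhood inside $W_R$, with $|W_R|<2q$.

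I then split by the size of $U$. Suppose first that $|U|\le |H|/2$. Within $H$, the neighbours of $U$ lie in $W_R\cup X\cup E$, a set of size less than $4q$ by~\eqref{eq:CORE}, so~\eqref{eq:INV} gives $|U|<F_\eps(4q)$. Suppose instead that $|U|>|H|/2$. Then the complement side, which contains $T\setminus W_R$ with $|T\setminus W_R|>F_\eps(4q)$, has size at most $|H|/2$ and also has a neighbourhood of size less than $4q$; this contradicts~\eqref{eq:INV}, using~\eqref{eq:target}. Hence $U$ is small, and $U\cup X_0\cup W_R$ has order less than $\mathcal N(q)$.

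Next I count edges, and this is the step I expect to be the main obstacle. Each root in $X_0$ has at least $2r$ neighbours in $R$, all lying in $U\cup W_R$. Each vertex of $U$ has at least $r$ neighbours in $R$, all lying in $U\cup W_R$. So $S=X_0\cup U\cup W_R$ satisfies $e(H[S])\ge r(|X_0|+|U|)$, roughly. I would also use the weighting from the cut condition: $|W_R|$ is less than $2|X_0|$ plus the slack coming from $W_X$. Making this precise, I expect $e(H[S])\gtrsim r|S|/3$.

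The remaining difficulty is that $|S|$ may exceed $q$, so neither~\eqref{eq:MIN} nor~\eqref{eq:smallMIN} applies directly. In that case I would use $|S|<\mathcal N(q)$ together with~\eqref{eq:q0}, and work inside $H[S]$ at scale $\mathcal N(q)$. Small subsets satisfy~\eqref{eq:MIN} with $\lambda$, which gives the local condition~\eqref{eq:LS} up to $\tau_j(\mathcal N(q))<q$. The density of $H[S]$ must exceed $K$, which is why $r$ is large relative to $M_j$ (I would check this against~\eqref{eq:rK} and adjust if needed). Corollary~\ref{cor:reservedlocal} then yields an eligible family with outer cycle shorter than $P_j(\mathcal N(q))<q$, contradicting minimality.

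In the other case, $|S|<q$, either~\eqref{eq:MIN} is contradicted directly or, when $2|S|<q$, Lemma~\ref{lem:smallsets} is. Lemma~\ref{lem:smallsets} is intended for cuts where $U$ is tiny and the counting only yields density of order $\kappa_j$, for instance when most roots are cut. The delicate point is the bookkeeping needed to ensure the density exceeds whichever threshold applies.
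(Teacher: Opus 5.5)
\textbf{There is a genuine gap in your edge-counting step.} Your set $S=X_0\cup U\cup W_R$ only records neighbours inside $R$. So the most that \eqref{eq:CORE} gives is
\[
e(H[S])\ge 2r|X_0|+\tfrac r2|U|>\tfrac r2|S|,
\]
using $|W_R|<2|X_0|$. (The cut weights are $2|W_X|+|W_R|<2q$, not the other way round.) This does contradict Lemma~\ref{lem:smallsets} when $2|S|<q$, because $r/2\ge512\kappa_j$. It cannot contradict anything else:
\begin{itemize}
\item \eqref{eq:MIN} needs density above $\lambda=8r$;
\item Corollary~\ref{cor:reservedlocal} needs density above $K=128M_j\lambda=1024M_jr$.
\end{itemize}
Both thresholds are proportional to $r$, with constants far larger than $1/2$, so adjusting $r$ in \eqref{eq:rK} cannot help. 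They have to be proportional to $r$, since a dense set must produce a reservoir of degree $r$. Hence your cases $q/2\le|S|<q$ and $|S|\ge q$ are both unproved. You also never secure $q>q_0$, which \eqref{eq:q0} requires.

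\textbf{The missing idea is to use the minimum degree $D$ of $H$ rather than the reservoir degrees.} Let $t=|X_0|+|U|$.
\begin{itemize}
\item If $t<q/8$, then $|S|<3t<3q/8$, and Lemma~\ref{lem:smallsets} gives the contradiction exactly as above.
\item If $t\ge q/8$, enlarge to $W=X\cup E\cup U\cup W_R$. Every $H$-neighbour of a vertex of $X_0\cup U$ lies in $W$, since neighbours outside $R$ are in $X\cup E$. Hence $2e(H[W])\ge Dt$. On the other side, $|W|<2q+2t\le18t$ and $|W|<F_\eps(4q)+4q\le\mathcal N(q)$. Therefore $e(H[W])/|W|>D/36>K$ by \eqref{eq:D}.
\end{itemize}
The same containment rules out $q\le q_0$: a root in $X_0$ would have all of its at least $D$ neighbours inside fewer than $\mathcal N(q_0)<D$ vertices. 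So \eqref{eq:q0} applies. Corollary~\ref{cor:reservedlocal} at scale $\mathcal N(q)$ then yields an eligible family with outer cycle shorter than $q$, contradicting minimality.

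The rest of your argument is sound and matches the paper's proof: the network and Menger setup, the reduction to $|W_R|<2|X_0|$, and the expansion argument giving $|U|<F_\eps(4q)$.
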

\begin{proof}
Construct a directed network with source $\sigma$ and sink $\omega$; an arc $uv$ is directed from $u$ to $v$. Each arc $\sigma x$, for $x\in X$, has capacity two. Replace each $v\in R$ by two distinct network vertices, its in-copy $v^-$ and out-copy $v^+$, with an arc $v^-v^+$ of capacity one. Include arcs $xv^-$ for each edge $xv$ from $X$ to $R$, both arcs $u^+v^-,v^+u^-$ for each edge $uv$ of $H[R]$, and arcs $t^+\omega$ for $t\in T$. Give all these remaining arcs capacity $2q+1$. An integral flow assigns integer arc values between zero and capacity, with conservation except at $\sigma,\omega$; its value is the net outflow from $\sigma$. A cut is a partition separating $\sigma$ from $\omega$; its capacity sums the capacities of arcs directed from the source side to the sink side.

An integral flow of value $2q$ gives the required paths after discarding flow cycles and, if necessary, stopping each path at its first target vertex. Suppose that no such flow exists, and take a minimum cut of capacity less than $2q$. No arc of capacity $2q+1$ crosses the cut. We may also assume that whenever $v^+$ is on the source side, $v^-$ is on that side: moving $v^-$ there cannot increase the cut capacity, since its only outgoing arc is the split arc to $v^+$.

Let $U\subseteq X$ be the roots on the source side, $B\subseteq R$ the vertices with both copies there, and $Z\subseteq R$ the vertices with only their in-copy there. The cut capacity is $2(q-|U|)+|Z|<2q$. Hence
\begin{equation}\label{eq:cut}
\begin{gathered}
U\ne\varnothing,\qquad |Z|<2|U|,\\
\Gamma_H(U)\cap R\subseteq B\cup Z,\qquad
N_{H[R]}(B)\subseteq Z,\qquad B\cap T=\varnothing.
\end{gathered}
\end{equation}
The neighbourhood assertions follow because no large-capacity arc crosses the cut.

In $H$, the external neighbourhood of $B$ is contained in $X\cup E\cup Z$, which has fewer than $4q$ vertices. If $|B|>n/2$, then $B'=R\setminus(B\cup Z)$ has fewer than $n/2$ vertices. Since $N_{H[R]}(B)\subseteq Z$, no vertex of $B'$ is adjacent to $B$, so $N_H(B')\subseteq X\cup E\cup Z$ as well. Moreover, $B'$ contains $T\setminus Z$, whose order exceeds $F_\eps(4q)$ by~\eqref{eq:target}. This contradicts~\eqref{eq:INV}. Therefore $|B|\le n/2$, and~\eqref{eq:INV} gives $|B|<F_\eps(4q)$. Let $t=|U|+|B|$.

\medskip
\noindent\emph{Case 1: $t<q/8$.}
Let $S=U\cup B\cup Z$. Then $0<|S|<3t<3q/8$, so $2|S|<q$. By~\eqref{eq:CORE} and~\eqref{eq:cut}, at least $2r|U|$ edges join $U$ to $B\cup Z$. Also, the edges inside $H[B\cup Z]$ incident with $B$ number at least $r|B|/2$. These edge collections are disjoint. Since $|Z|<2|U|$,
\[
e(H[S])\ge2r|U|+\frac r2|B|
>\frac r2|S|\ge512\kappa_j|S|,
\]
contrary to Lemma~\ref{lem:smallsets}.

\medskip
\noindent\emph{Case 2: $t\ge q/8$.}
Let $W=X\cup E\cup B\cup Z$. All neighbours in $H$ of every vertex of $U\cup B$ lie in $W$. The original minimum degree therefore gives
\[
2e(H[W])\ge Dt,\qquad
|W|<2q+2t,\qquad |W|<\mathcal N(q).
\]
The last bound uses $|B|<F_\eps(4q)$, $|E|<q$ and $|Z|<2q$.

If $q\le q_0$, a vertex of the nonempty set $U$ would have all its neighbours in a set of fewer than $\mathcal N(q_0)<D$ vertices, a contradiction. Thus $q>q_0$. Moreover, the preceding bounds give
\[
\frac{e(H[W])}{|W|}>\frac{Dt}{4(q+t)}\ge\frac D{36}>K.
\]
At ambient scale $N=\mathcal N(q)$, conditions~\eqref{eq:q0} and~\eqref{eq:MIN} give every local edge bound required by Corollary~\ref{cor:reservedlocal}. That corollary produces an eligible $j$-layer family in $H[W]$ whose outermost cycle has length at most $P_j(\mathcal N(q))<q$, again contradicting minimality.

Both cases are impossible. The maximum-flow min-cut theorem yields a flow of value $2q$. The resulting paths lie in $H[X\cup R]$, since the network contains no vertex of $E$.
\end{proof}

\section{Proof of Theorem~\ref{thm:main}}\label{sec:proof}

It remains to join consecutive vertices of the chosen inner cycle. A common target set alone does not prescribe these pairings. We first find a large clique minor in the reservoir and then apply Lemma~\ref{lem:rooted}.

\Needspace{5\baselineskip}
\begin{lemma}\label{lem:largeclique}
For every fixed $a>0$, there is $n_a$ such that the following holds. Let $H$ be a weak expander on $n\ge n_a$ vertices, and suppose that $V(H)=X\dcup E\dcup R$, $1\le q=|X|\le(\log n)^a$, and $|E|<q$. If
\begin{equation}\label{eq:h}
h=\lceil F_\eps(4q)\rceil+4q+2,
\end{equation}
then $H[R]$ contains a $K_h$ model.
\end{lemma}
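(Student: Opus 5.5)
We argue by contradiction: if $H[R]$ has no $K_h$ minor, then the separator theorem lets us cut $H$ into pieces whose boundaries are too small for a weak expander. Note $h=O(q\log^2 q)=O((\log n)^{a}\log^2\log n)$, so $h^{3/2}$ is polylogarithmic in $n$. Suppose $H[R]$ has no $K_h$ model. Then no induced subgraph of $H[R]$ has one either, and Lemma~\ref{lem:AST} applies to every $H[R']$ with $R'\subseteq R$, giving a separator of order at most $h^{3/2}\sqrt{|R'|}$.

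We iterate the separator to split $R$ into small parts. Fix a threshold $m_0$ equal to a suitable polylogarithm of $n$, to be chosen below. Starting from $R$, repeatedly apply Lemma~\ref{lem:AST} to any current piece with more than $m_0$ vertices. Record its separator in a global set $Z^*$ and replace the piece by its two sides. A piece of order $m$ splits into pieces of order at most $2m/3$. Summing over the levels of the recursion, the total separator size is
\[
|Z^*|\le\sum_{\text{pieces}} h^{3/2}\sqrt{m}=O\bigl(h^{3/2}\,n/\sqrt{m_0}\bigr),
\]
since at each level the pieces have total order at most $n$ and orders at least $m_0(3/2)^{\ell}$ at depth $\ell$ from the bottom. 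This is the standard geometric-sum estimate. The resulting final pieces $P_1,\dots,P_k$ each have order at most $m_0$, and no edge of $H$ joins two different pieces. Hence $N_H(P_i)\subseteq X\cup E\cup Z^*$.

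Next we group pieces to contradict expansion. Greedily union pieces into a set $U$ until $|U|$ first exceeds $n/4$; since each piece has order at most $m_0\ll n$, we obtain $n/4<|U|\le n/2$. We have $|R|\ge n-2q$, and $|Z^*|=o(n)$ once $m_0\ge h^{3}(\log n)^{5}$. So the pieces cover at least $n/2$ vertices and the greedy step succeeds. Then
\[
|N_H(U)|\le |X|+|E|+|Z^*|<2q+O\bigl(h^{3/2}n/\sqrt{m_0}\bigr).
\]
On the other hand, \eqref{eq:expand} gives $|N_H(U)|\ge\rho(n/4)\,n/4=\Omega(n/\log^2 n)$. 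Choosing $m_0=(\log n)^{c}$ with $c$ large in terms of $a$ (say $c=3a+10$), the right-hand side is $O((\log n)^{2a}\cdot n/(\log n)^{3a/2+5})=o(n/\log^2 n)$. Also $2q\le 2(\log n)^a=o(n/\log^2 n)$. Taking $n_a$ large gives the contradiction.

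The main obstacle is bookkeeping in the recursive separator bound. One must check that the sum of $\sqrt{m}$ over pieces is $O(n/\sqrt{m_0})$. It suffices to group pieces by order in dyadic ranges $[m_0 2^{i},m_0 2^{i+1})$: pieces in one range are disjoint, so the range contributes at most $(n/(m_0 2^i))\sqrt{m_0 2^{i+1}}$, and these contributions sum geometrically. One must also confirm that $m_0\le n$, so the recursion makes sense, which holds for large $n$. Everything else follows directly from \eqref{eq:expand} and Lemma~\ref{lem:AST}.
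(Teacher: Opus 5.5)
Your argument is correct and uses the same two ingredients as the paper, the separator theorem and the expansion inequality, but the recursion is unnecessary.

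\textbf{The paper's route.} The paper applies Lemma~\ref{lem:AST} once, to $H[R]$ with $m=|R|>n-2q\ge3n/4$. The separator $Z$ has order $z\le h^{3/2}\sqrt m\le n/8$. The smaller side $U$ then satisfies $n/8\le m/3-z\le|U|\le m/2\le n/2$, while $N_H(U)\subseteq Z\cup X\cup E$ has fewer than $z+2q$ vertices. Because $h$ is polylogarithmic, $h^{3/2}\sqrt n+2q$ is polynomially smaller than $\eps n/\log^2(15n)$, so expansion of $U$ gives the contradiction immediately.

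\textbf{Your route.} Decomposing into pieces of order at most $m_0$ replaces that single separator by one of total order $O(h^{3/2}n/\sqrt{m_0})$. This is much larger, and it serves only to produce a set $U$ of order between $n/4$ and $n/2$, which one balanced separation already gives. The extra structure you obtain is a small set whose deletion shatters $R$ into polylogarithmic pieces. That would matter if expansion were available only for small sets, but it is not needed here.

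\textbf{Two bookkeeping repairs, if you keep your version.}
\begin{enumerate}
\item Split pieces within one dyadic range $[m_02^i,m_02^{i+1})$ need not be disjoint. Since the shrink factor is only $2/3$, a piece and its child can both lie in the same range. The fix is easy: each vertex lies in at most two such pieces. Alternatively, group pieces by height in the recursion tree (pieces of equal height are disjoint, and a piece of height $\ell$ has order more than $m_0(3/2)^\ell$) and apply Cauchy--Schwarz.
\item The estimate $O\bigl((\log n)^{2a}n/(\log n)^{3a/2+5}\bigr)$ is not $o(n/\log^2 n)$ when $a\ge6$. Use instead the sharper bound $h^{3/2}=O\bigl((\log n)^{3a/2}(\log\log n)^3\bigr)$. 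Or use your own earlier condition $m_0\ge h^3(\log n)^5$: the choice $m_0=(\log n)^{3a+10}$ satisfies it for large $n$, and it gives $|Z^*|=O(n/(\log n)^{5/2})$.
\end{enumerate}
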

\begin{proof}
Uniformly for $q\le(\log n)^a$, the quantity $h$ is bounded by a fixed power of $\log n$ times a fixed power of $\log\log n$. Thus $n_a$ can be chosen so that, for all such $q$ and $n\ge n_a$,
\begin{equation}\label{eq:sepbudget}
h^{3/2}\sqrt n+2q\le\frac{\eps n}{16\log^2(15n)},
\qquad 2q\le n/4.
\end{equation}

Suppose that $H[R]$ has no $K_h$ minor, and let $m=|R|>n-2q\ge3n/4$. Lemma~\ref{lem:AST} gives a set $Z\subseteq R$ of size $z\le h^{3/2}\sqrt m$ such that $R\setminus Z$ is the union of two mutually nonadjacent sets, each of size at most $2m/3$. Let $U$ be the smaller of these sets. Then $m/3-z\le |U|\le m/2\le n/2$. By~\eqref{eq:sepbudget}, $z\le n/8$, so $|U|\ge n/8$. Its external neighbourhood in $H$ is contained in $Z\cup X\cup E$, and therefore
\[
|N_H(U)|<z+2q\le\frac{\eps n}{16\log^2(15n)}.
\]
But expansion gives
\[
|N_H(U)|\ge\frac{\eps|U|}{\log^2(15|U|)}
\ge\frac{\eps n}{8\log^2(15n)},
\]
a contradiction.
\end{proof}

\begin{lemma}\label{lem:orderedfromfans}
Let $X=\{x_1,\ldots,x_q\}$ and $R$ be disjoint vertex sets of $H$, where $q\ge3$, and prescribe the cyclic order $x_1,\ldots,x_q$. Suppose that $H[R]$ contains a $K_h$ model with $h$ as in~\eqref{eq:h}. Suppose also that every $T\subseteq R$ satisfying~\eqref{eq:target} admits two paths from each root in $X$ to distinct vertices of $T$, such that the $2q$ paths have their remaining vertices in $R$ and are disjoint outside $X$. Then there are paths $Q_i$ from $x_i$ to $x_{i+1}$, with indices modulo $q$, whose interiors are in $R$ and pairwise vertex-disjoint. Their union is a simple cycle in the prescribed order and has no edge with both endpoints in $X$.
\end{lemma}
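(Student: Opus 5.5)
The plan is to apply Lemma~\ref{lem:rooted} to an auxiliary graph obtained by cloning each root, as indicated in the overview. Let $J$ be the graph with vertex set $R\cup\{x_i^-,x_i^+:1\le i\le q\}$. Here $x_i^-,x_i^+$ are new clone vertices, $H[R]$ is kept, and each clone $x_i^\pm$ is joined to every neighbour of $x_i$ in $R$. Let $Y$ be the set of $p=2q$ clones. The given $K_h$ model lies in $H[R]=J-Y$, and $h\ge 2p=4q$ by~\eqref{eq:h}. If $J$ has a $K_{2q}$ model rooted at $Y$, then the branch sets of $x_i^+$ and $x_{i+1}^-$ are adjacent. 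Take a path inside the branch set of $x_i^+$ from $x_i^+$ to a vertex adjacent to the branch set of $x_{i+1}^-$, cross the edge, and continue inside that branch set to $x_{i+1}^-$. This gives an $x_i^+$--$x_{i+1}^-$ path whose interior lies in the union of these two branch sets. Each branch set is used by exactly one such path: the branch set of $x_i^+$ only at step $i$, and that of $x_{i+1}^-$ only at step $i$. Hence the interiors are pairwise disjoint and lie in $R$. Identifying clones with $x_i$ gives the paths $Q_i$. Each $Q_i$ has at least one interior vertex, because clones are not adjacent to each other. So every edge has an endpoint in $R$, and since $q\ge3$ the union is a simple cycle in the prescribed order.

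The main step is to verify the separation hypothesis of Lemma~\ref{lem:rooted}. Suppose $(A',B')$ is a separation of $J$ of order less than $2q$ with $Y\subseteq A'$ and some branch set $V_1$ of the $K_h$ model contained in $B'\setminus A'$. The separator $S=A'\cap B'$ has fewer than $2q$ vertices, so it meets fewer than $2q$ branch sets. At least $h-2q-1$ branch sets therefore avoid $S$. Each of them is adjacent to $V_1$, and connectivity together with the adjacency to $V_1\subseteq B'\setminus A'$ places each of them in $B'\setminus A'$. Let $T$ consist of one vertex from each of these branch sets, so $T\subseteq R\cap(B'\setminus A')$ and $|T|\ge h-2q-1=\lceil F_\eps(4q)\rceil+2q+1>F_\eps(4q)+2q$. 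This is exactly~\eqref{eq:target}.

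The hypothesis then supplies $2q$ paths, two from each $x_i$, ending at distinct vertices of $T$ and disjoint outside $X$. Reroute the two paths from $x_i$ so that they start at $x_i^-$ and $x_i^+$ respectively. Since the first edge of each path goes into $R$ and clones inherit the neighbourhood of $x_i$ in $R$, this gives $2q$ vertex-disjoint paths in $J$ from $Y\subseteq A'$ to $T\subseteq B'\setminus A'$. One point needs checking: the hypothesis says the remaining vertices lie in $R$, so the paths use no other root in their interiors. Every such path must meet $S$, which is impossible since $|S|<2q$. So no bad separation exists, and Lemma~\ref{lem:rooted} applies.

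I expect the chief delicacy to be this counting step, which must leave at least the threshold in~\eqref{eq:target} of surviving branch sets on the far side. The choice of $h$ in~\eqref{eq:h} has exactly the slack $2q+2$ needed for this. The remaining steps are bookkeeping about clones and edges inside $X$.
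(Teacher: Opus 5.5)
Your proposal is correct and follows the paper's proof essentially step for step: the same cloning construction, the same verification of the separation hypothesis of Lemma~\ref{lem:rooted} via the $2q$ disjoint clone-to-target paths meeting a separator of order less than $2q$, and the same pairing of the branch sets of $x_i^+$ and $x_{i+1}^-$. The differences are cosmetic: you take $T$ to be one vertex from each surviving branch set rather than all of $(B\setminus A)\cap R$, and your count $h-2q-1$ is looser than the available $h-(2q-1)$, which still clears the threshold in~\eqref{eq:target}.
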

\begin{proof}
Form a graph $J$ from $H[R]$ by adding two new vertices $x_i^-,x_i^+$ for each $x_i$, called its \emph{clones}. All these vertices are distinct; each has neighbourhood $N_H(x_i)\cap R$, and they are mutually nonadjacent. Let $Y=\{x_i^-,x_i^+:1\le i\le q\}$. The clique model in $H[R]$ remains a model in $J-Y$. For every target $T$ under consideration, the path hypothesis gives $2q$ vertex-disjoint $Y$--$T$ paths in $J$, one starting at each clone.

Suppose that $J$ has a separation $(A,B)$ of order less than $2q$ such that $Y\subseteq A$ and a whole branch set of the model is in $B\setminus A$. At most $2q-1$ branch sets meet $A\cap B$. Every branch set avoiding this separator is connected and hence lies entirely on one open side. As all branch sets are pairwise adjacent and one is entirely in $B\setminus A$, every branch set avoiding the separator must lie in $B\setminus A$.

Consequently, $T=(B\setminus A)\cap R$ has at least $h-(2q-1)>F_\eps(4q)+2q$ vertices. The corresponding $2q$ vertex-disjoint $Y$--$T$ paths all meet $A\cap B$, which has fewer than $2q$ vertices. This is impossible.

Since $h\ge4q=2|Y|$, Lemma~\ref{lem:rooted} now gives a $K_{2q}$ model rooted at $Y$. For each $i$, use the branch sets containing $x_i^+$ and $x_{i+1}^-$ and one edge between them to obtain a path from $x_i^+$ to $x_{i+1}^-$. All $q$ paths are vertex-disjoint, because each rooted branch set is used exactly once. No terminal is an internal vertex of a path: every branch set contains exactly its own terminal.

Identify $x_i^-$ and $x_i^+$ with $x_i$ for each $i$. The resulting paths $Q_i$ have pairwise disjoint interiors in $R$, connect consecutive roots, and contain no other vertex of $X$. Since the clone set was independent, every path has length at least two. Their union is therefore a simple cycle in the prescribed order, and every edge has an endpoint in $R$.
\end{proof}

\begin{proof}[Proof of Theorem~\ref{thm:main}]
We prove by induction on $j$ that there is a constant $\kappa_j$ for which~\eqref{eq:indhyp} holds. For $j=1$, take $\kappa_1=1$, since a forest on $n$ vertices has at most $n-1$ edges.

Assume the assertion for a fixed $j\ge1$. Choose $r,\lambda,K$ as in~\eqref{eq:rK}. Lemma~\ref{lem:localshort} supplies $N_j$ and the functions $\tau_j,P_i$ in~\eqref{eq:localfunctions}. Choose $q_0$ to satisfy~\eqref{eq:q0}, and then choose $D$ as in~\eqref{eq:D}. Finally, let $n_0\ge\max\{N_j,n_{a_j}\}$, where $n_{a_j}$ is the threshold in Lemma~\ref{lem:largeclique} with $a=a_j$. All these choices depend only on $j$ and the already chosen $\kappa_j$.

Consider a weak expander $H$ with $|H|\ge n_0$ and $\delta(H)\ge D$. Choose a minimum eligible $j$-layer family, and let $X$ be its outer vertex set. Lemmas~\ref{lem:peelX} and~\ref{lem:shorteligible} give a partition $X,E,R$ with $|E|<q=|X|\le\tau_j(|H|)=(\log|H|)^{a_j}$. Lemma~\ref{lem:fullfan} supplies two disjoint paths per root to every sufficiently large target set in $R$. Lemma~\ref{lem:largeclique} provides the required clique model in $H[R]$, and Lemma~\ref{lem:orderedfromfans} gives an additional outer cycle.

Every old cycle has all its vertices in $X$, whereas each edge of the new cycle has an endpoint in $R$. Thus the new cycle is edge-disjoint from every old cycle. It contains $X$ in the cyclic order of the old outer cycle, and the other containment and order conditions follow from the chosen inner family. Hence $H$ contains $j+1$ layers.

For an arbitrary graph $G$, define $\kappa_{j+1}=2\max\{D,n_0\}$. If $e(G)>\kappa_{j+1}|G|$, Lemma~\ref{lem:extract} gives a weak expander $H\subseteq G$ with $\delta(H)>\kappa_{j+1}/2\ge\max\{D,n_0\}$. In particular $|H|>n_0$, so the preceding argument applies. This proves~\eqref{eq:indhyp} at level $j+1$ for every graph order and completes the induction. Taking $C_k=\kappa_k$ proves the stated forcing result. For $k\ge2$ and $n\ge1$,
\[
f_k(n)\le\lfloor\kappa_kn\rfloor+1\le(\kappa_k+1)n.
\]
\end{proof}

These recursively defined constants are large, but their size does not depend on the order of the input graph. Only the selected inner family is required to be short. The paths obtained from the rooted clique model, and hence the additional outer cycle, may have unrestricted length. This is sufficient because the next inductive step uses only existence and makes its own choice of an eligible inner family.

\smallskip
\noindent\textbf{Use of AI tools.}
Building on our earlier work~\cite{AGH}, discussions with ChatGPT (GPT-6 Astra Pro) helped us develop the idea of reselecting
the inner cycles while retaining an external reservoir, and of using
rooted clique minors to obtain the required cyclic order without requiring the new outer cycle to be short. The tool also assisted
with language polishing. The authors take full responsibility for
the mathematical content and final manuscript.

\end{document}